\documentclass{amsart}

%     If you need symbols beyond the basic set, uncomment this command.
\usepackage{amssymb}

%     If your article includes graphics, uncomment this command.
%\usepackage{graphicx}

%     If the article includes commutative diagrams, ...
%\usepackage[cmtip,all]{xy}

%     Update the information and uncomment if AMS is not the copyright
%     holder.
%\copyrightinfo{2009}{American Mathematical Society}

\newtheorem{theorem}{Theorem}[section]

\theoremstyle{definition}
\newtheorem{definition}[theorem]{Definition}

\newtheorem{proposition}[theorem]{Proposition}

\theoremstyle{remark}
\newtheorem{remark}[theorem]{Remark}

\numberwithin{equation}{section}

\usepackage{indentfirst,enumerate,cite,amssymb,amsfonts,amsmath,amsthm,mathrsfs,dsfont}

\usepackage[colorlinks=true,linkcolor=blue,citecolor=blue]{hyperref}

\allowdisplaybreaks

\begin{document}
	
	\title[Global Solutions for Systems on closed manifolds]{Global Solutions for Systems of Strongly Invariant Operators on Closed Manifolds}
	
	% Only \author and \address are required; other information is optional. Remove any unused author tags.
	
	%----------Author 1
	\author[A. Kirilov]{Alexandre Kirilov}
	\address{
		Departamento de Matem\'atica 
		Universidade Federal do Paran\'a  
		Caixa Postal 19096, CEP 81530-090, Curitiba 
		Brasil}
	%\curraddr{}
	\email{akirilov@ufpr.br}
	\thanks{This study was financed in part by the Coordenação de Aperfeiçoamento de Pessoal de Nível Superior - Brasil (CAPES) - Finance Code 001. The first and second authors were supported in part by CNPq - Brasil (grants 316850/2021-7 and 423458/2021-3).}
	
	%----------Author 2
	\author[W. A. A. de Moraes]{Wagner Augusto Almeida de Moraes}
	\address{
		Departamento de Matem\'atica  
		Universidade Federal do Paran\'a  
		Caixa Postal 19096\\  CEP 81530-090, Curitiba, Paran\'a 
		Brasil}
	%\curraddr{}
	\email{wagnermoraes@ufpr.br}
	%\thanks{}
	
	%----------Author 3
	\author[P. M. Tokoro]{Pedro Meyer Tokoro}
	\address{
		Programa de P\'os-Gradua\c c\~ao em Matem\'atica  
		Universidade Federal do Paran\'a  
		Caixa Postal 19096\\  CEP 81530-090, Curitiba, Paran\'a  
		Brasil}
	%\curraddr{}
	\email{pedro.tokoro@ufpr.br}
	%\thanks{}
	
	%----------classification, keywords, date
	\subjclass{Primary 58J40, 35H10 Secondary 47A15, 35P15}
	
	\keywords{Global Solvability, Global Hypoellipticity, Fourier analysis on closed manifolds, Strongly Invariant Operators, System of operators}
	
	%\date{today}
	%\dedicatory{}
	
	%    "Communicated by" -- provide editor's name; required.
	%\commby{...}
	
	\begin{abstract}
		We study the global hypoellipticity and solvability of strongly invariant operators and systems of strongly invariant operators on closed manifolds. Our approach is based on the Fourier analysis induced by an elliptic pseudo-differential operator, which provides a spectral decomposition of $L^2(M)$ into finite-dimensional eigenspaces. This framework allows us to characterize these global properties through asymptotic estimates on the matrix symbols of the operators. Additionally, for systems of normal strongly invariant operators, we derive an explicit solution formula and establish sufficient conditions for global hypoellipticity and solvability in terms of their eigenvalues. 
		%Furthermore, under the additional assumption that the operators are pairwise commutative, we obtain a complete characterization of global hypoellipticity and solvability based on their eigenvalues.
	\end{abstract}

	%%% ----------------------------------------------------------------------
	\maketitle
	%%% ----------------------------------------------------------------------
	%\tableofcontents

	%==================================================
	%==================================================
	\section{Introduction}
	%==================================================
	%==================================================

	In this paper, we investigate the solvability and regularity of solution of systems of operators defined on closed manifolds (i.e. compact smooth manifolds without boundary). Our approach is based on Fourier analysis associated with an elliptic pseudo-differential operator on $M$, which provides a spectral decomposition of $L^2(M)$ into finite-dimensional eigenspaces. This framework, originally introduced by Seeley \cite{See67}, offers the appropriate setting for studying the regularity and solvability of strongly invariant operators through their spectral behavior.  
	
	The concept of strongly invariant operators, as introduced by Delgado and Ruzhansky \cite{DR2018}, considers linear operators $P:C^\infty(M)\to L^2(M)$ that extend continuously to $\mathscr{D}'(M)$ and commute with a fixed elliptic operator. This class of operators includes, for instance, left-invariant vector fields on compact Lie groups and certain classes of (pseudo)differential operators and Fourier multipliers.  
	
	With these tools at our disposal, we characterize the global properties of a single strongly invariant operator $P$ through asymptotic estimates on its matrix symbol $\sigma_P$. Our approach recovers the result of Kirilov and de Moraes \cite{KM2020}, who provided a full characterization of the global hypoellipticity of a single strongly invariant operator. Furthermore, inspired by the work of Araújo \cite{Araujo2019}, we establish necessary and sufficient conditions for the closedness of the range os $P$.
	
	Next, we extend these results to systems $\mathbb{P}=(P_1,\ldots,P_n)$ of strongly invariant operators, establishing necessary and sufficient conditions for their global hypoellipticity and solvability. The main challenge in this setting arises from the interplay between multiple operators, particularly in the absence of commutativity. Addressing this requires precise spectral estimates on the matrix symbol $\sigma_\mathbb{P}=(\sigma_{P_1},\ldots,\sigma_{P_n})$.  
	
	Moreover, under the additional assumption that the operators $P_j$ are normal, we derive an explicit formula for the solution of the system $\mathbb{P}=(P_1,\ldots,P_n)$ and reformulate our previous results to obtain conditions for global hypoellipticity and solvability based on the eigenvalues of the operators $P_j$. This case recovers the study of global properties of systems of left-invariant vector fields on compact Lie groups. Specifically, if $G$ is a compact Lie group with Lie algebra $\mathfrak{g}$, every left-invariant vector field $X\in\mathfrak{g}$ is a skew-symmetric operator (hence normal) that commutes with the Laplace-Beltrami operator $\Delta_G$, which is elliptic. The global hypoellipticity and solvability of vector fields on compact Lie groups were characterized by conditions on their eigenvalues, as proved in \cite{DKP2024arxiv,KMR20,KMR20b}.  
	
	Finally, we show that assuming commutativity, i.e., $[P_j, P_k] = P_jP_k - P_kP_j = 0$ for all $j,k = 1,\dots,n$, leads to a significant simplification of the previously obtained results. In this setting, our results recover classical theorems on the torus, including those from \cite{AKM19,AM2021,BP1999_jmaa,Be1999}, among others.

	%==================================================
	%==================================================
	\section{Fourier Analysis on Closed Smooth Manifolds} \label{overview}
	%==================================================
	%==================================================
	
	Let $M$ be a $d$-dimensional closed smooth manifold equipped with a positive measure $dx$. We denote by $L^2(M)$ the space of square-integrable complex-valued functions on $M$ with respect to this measure and by $H^s(M)$ the standard Sobolev space of order $s\in\mathbb{R}$ defined on $M$. The spaces of smooth functions and distributions are characterized by the following identities:
	\begin{equation*}\label{sobolev-dx}
		C^\infty(M) = \bigcap_{s \in \mathbb{R}} H^{s}(M) \quad \text{and} \quad {\mathscr{D}'}(M) = \bigcup_{s \in \mathbb{R}} H^{s}(M).
	\end{equation*}
	
	These spaces are endowed with the projective and inductive limit topologies, respectively, making $C^\infty(M)$ an FS space and ${\mathscr{D}'}(M)$ a DFS space. Notice that closed subspaces of FS and DFS spaces remain FS and DFS, respectively. Furthermore, the Closed Graph Theorem and the Open Mapping Theorem hold in these spaces. For further details, see \cite{Komatsu1967,kothe_TVS}.
	
	To establish a framework for Fourier analysis on $M$, following the construction proposed by Delgado and Ruzhansky in \cite{DR2018}, we consider a classical positive definite elliptic pseudo-differential operator $E$ of order $\nu > 0$ on $M$. The eigenvalues of $E$, counted without multiplicity, form a sequence $\{ \lambda_k \}_{k \in \mathbb{N}_0}$ satisfying
	$$
	0 = \lambda_0 < \lambda_1 < \lambda_2 < \dots \to \infty.
	$$
	
	Due to the ellipticity of $E$, each eigenvalue $\lambda_k$ corresponds to a finite-dimensional eigenspace $E_{\lambda_k} \subset C^\infty(M)$ of dimension $d_k$, satisfying the summability condition
	\begin{equation*}
		\sum_{k\in\mathbb{N}_0} d_k(1+\lambda_k)^{-q} < \infty \quad \text{for any } \quad q > d/\nu.
	\end{equation*}
	In particular, there exists a constant $C>0$ such that
	\begin{equation}\label{asymp}
		d_k \leq C(1+\lambda_k)^ {d/ \nu}, \quad k\in\mathbb{N}_0.
	\end{equation}
	
	An orthonormal basis for $L^2(M)$, consisting of smooth eigenfunctions of $E$, can be chosen as
	\[
	\mathcal{B} =\{ \varphi_\ell^k \mid 1 \leq \ell \leq d_k,  k \in \mathbb{N}_0 \},
	\]
	where, for each $k \in \mathbb{N}_0$, the finite subset $\mathcal{B}_k=\{\varphi_1^k,\dots,\varphi_{d_k}^k\}$ forms an orthonormal basis for the eigenspace $E_{\lambda_k}$. Moreover, $L^2(M)$ admits the following decomposition:
	$$
	L^2(M) = \bigoplus_{k \in \mathbb{N}_0} E_{\lambda_k}.
	$$
	
	For a function $u \in L^2(M)$, we can represent it as
	\begin{equation}\label{fourier-series-u-in-L2}
		u(x) = \sum_{k\in\mathbb{N}_0} \langle \widehat{u}(k), \overline{\varphi^k(x)} \rangle_{\mathbb{C}^{d_k}}
		= \sum_{k=0}^\infty \sum_{\ell=1}^{d_k} \widehat{u}(k)_\ell \varphi_\ell^k(x),
	\end{equation}
	where $\varphi^k(x) \doteq \big(\varphi^k_1(x), \dots, \varphi^k_{d_k}(x)\big)$, $\widehat{u}(k) \doteq \big(\widehat{u}(k)_1, \dots, \widehat{u}(k)_{d_k}\big)$, and the Fourier coefficients with respect to the orthonormal basis $\mathcal{B}_k$ are given by
	\begin{equation*}
		\widehat{u}(k)_\ell = \int_M u(x)\overline{\varphi^k_\ell(x)} \, dx, \quad 1 \leq \ell \leq d_k, \quad k \in \mathbb{N}_0.
	\end{equation*}
	
	Given $u \in L^2(M)$, the Hilbert-Schmidt norm of $\widehat{u}(k)$ is given by
	\begin{equation*}
		\|\widehat{u}(k)\|=\left(\sum_{\ell=1}^{d_k}|\widehat{u}(k)_\ell|^2\right)^{1/2},
	\end{equation*}
	and we obtain the Plancherel formula:
	\begin{equation*}
		\|u\|_{L^2(M)}=\sum_{k=0}^{\infty}\sum_{\ell=1}^{d_k}|\widehat{u}(k)_\ell|^2=\sum_{k=0}^{\infty}\|\widehat{u}(k)\|^2.
	\end{equation*}
	
	The Fourier coefficients of a distribution $u \in {\mathscr{D}'}(M)$ are given by $\widehat{u}(k)_\ell \doteq u(\overline{\varphi_\ell^k}),$  
	and $u$ can be expanded in a series as in \eqref{fourier-series-u-in-L2}. Moreover, a distribution $u \in {\mathscr{D}'}(M)$ belongs to the Sobolev space $H^s(M)$ if and only if 
	$$
	\|u\|_{H^s} \doteq \left[\sum_{k=0}^\infty \sum_{\ell=1}^{d_k} (1+\lambda_k)^{2s/ \nu} |\widehat{u}(k)_\ell|^2 \right]^{1/2}< \infty.
	$$ 
	
	Smooth functions on $M$ are characterized by the rapid decay of their Fourier coefficients: $f \in C^\infty(M)$ if and only if, for every $N \in \mathbb{N}$, there exists a constant $C_N > 0$ such that  
	\begin{equation}\label{fourier_coef}
		\|\widehat{f}(k)\| \leq C_N(1+\lambda_k)^{-N}, \quad \text{for all } k \in \mathbb{N}.
	\end{equation}  
	By duality, distributions on $M$ are characterized by the slow growth of their Fourier coefficients: $u \in {\mathscr{D}'}(M)$ if and only if there exist $N \in \mathbb{N}$ and $C > 0$ such that  
	\begin{equation}\label{fourier_coef-distr}
		\|\widehat{u}(k)\| \leq C(1+\lambda_k)^N, \quad \text{for all } k \in \mathbb{N}.
	\end{equation}  
	
	\begin{definition}
		We say that a linear operator $P:C^\infty(M)\to L^2(M)$ is \textit{strongly invariant relative to} $E$ when the domain of its adjoint operator $P^*$ contains $C^\infty(M)$, it extends to a continuous linear operator $P:\mathscr{D}'(M)\to\mathscr{D}'(M)$, and if $P$ satisfies one of the following equivalent conditions:
		\begin{itemize}
			\item[(a)] $P(E_{\lambda_k})\subset E_{\lambda_k}$, for each $k\in\mathbb{N}_0$;
			\item[(b)] $PE\varphi_\ell^k=EP\varphi_\ell^k$, for each $k\in\mathbb{N}_0$ and $\ell=1,\dots,d_k$;
			\item[(c)] For each $k\in\mathbb{N}_0$, there exists a matrix $\sigma_P(k)\in \mathbb{C}^{d_k\times d_k}(\mathbb{C})$ such that 
			$$
			\widehat{P\varphi_\ell^N}(k)_j=\sigma_P(k)_{j,\ell}\delta_{N,k},  \quad \text{for all } \varphi_\ell^N \in \mathcal{B}_k;
			$$
			\item[(d)] For each $k\in\mathbb{N}_0$, there exists a matrix $\sigma_P(k)\in M_{d_k}(\mathbb{C})$ such that 
			$$
			\widehat{Pf}(k)=\sigma(k)\widehat{f}(k), \quad f\in C^\infty(M);
			$$
			\item[(e)] The commutator $[E,P]=EP-PE$ vanishes on $L^2(M)$.
		\end{itemize}
	\end{definition}		
	
	Moreover, the matrices $\sigma_P(k)$ in conditions (c) and (d) coincide, and the sequence $\sigma_P \doteq \{\sigma_P(k)\mid k\in \mathbb{N}_0 \}$ is called the matrix symbol of $P$.	
	
	\begin{remark}
		A linear operator $P$ satisfying the first four conditions above is called an invariant operator (or a Fourier multiplier) relative to $E$. Since condition (e) is clearly stronger than the first four, the assumption that $P$ extends to $\mathscr{D}'(M)$ is included to ensure its equivalence with the other conditions. This justifies the term strongly above. The proof of these equivalences is provided in Section 4 of \cite{DR2018}.
	\end{remark}
	
	\begin{definition}
		Let $P:C^\infty(M)\to L^2(M)$ be a strongly invariant operator relative to $E$. We say that its matrix symbol  $\sigma_P = \{\sigma_P(k) \mid k\in \mathbb{N}_0 \}$ \textit{has moderate growth} if there exist $N\in\mathbb{N}$ and $C>0$ such that
		\begin{equation}\label{order}
			\|\sigma_P(k)\|\leq C(1+\lambda_k)^{N/\nu},\ \forall k\in\mathbb{N}_0.
		\end{equation}
		In this case, we define the order of $\sigma_P$ by
		\begin{equation*}
			\text{ord}(\sigma_P)=\inf\{N\in\mathbb{R} \mid \text{condition  \eqref{order} holds}\}.
		\end{equation*}
		When the symbol of $P$ has moderate growth, we say that the order of $P$ is the order of its symbol.
	\end{definition}
	
	\begin{remark}
		Throughout this paper, we assume that a classical positive definite elliptic pseudo-differential operator $E$ of order $\nu > 0$ on $M$ is fixed. Furthermore, whenever we refer to a \textbf{strongly invariant operator} $P$, it should be understood that we are considering an operator $P:C^\infty(M)\to L^2(M)$ that is strongly invariant with respect to $E$, with a matrix symbol $\sigma_P = \{\sigma_P(k) \mid k\in \mathbb{N}_0 \}$ of moderate growth.
	\end{remark}

	%==================================================
	%==================================================
	\section{Global Properties of Strongly Invariant Operators}
	%==================================================
	%==================================================
	
	In this section, we study the global regularity and solvability of a single strongly invariant operator. While the global hypoellipticity of such operators was characterized by Kirilov and de Moraes in \cite{KM2020}, for global solvability, we adapt some ideas and results obtained by Araújo in \cite{Araujo2019}, proving that the closedness of the range of an operator is equivalent to a weaker version of global regularity.
	
	We begin by defining the three global properties studied in this paper:
	
	\begin{definition}\label{global-prop-operators}
		Let $P:\mathscr{D}'(M)\to \mathscr{D}'(M)$ be a linear operator. We say that:
		\begin{enumerate}
			\item $P$ is \textit{globally hypoelliptic} if the conditions $u\in\mathscr{D}'(M)$ and $Pu\in C^\infty(M)$ imply that $u\in C^\infty(M);$
			\item $P$ is \textit{almost globally hypoelliptic} if the conditions $u\in\mathscr{D}'(M)$ and $Pu\in C^\infty(M)$ imply that there exists $v\in C^\infty(M)$ such that $Pu=Pv;$ 
			\item $P$ is \textit{globally solvable} if the induced map $P:C^\infty(M)\to C^\infty(M)$ has a closed image.
		\end{enumerate}
	\end{definition}
	
	We begin by recalling a result established in \cite{KM2020}.
	
	\begin{theorem}\label{KM}
		A strongly invariant operator $P$ is globally hypoelliptic if and only if there exist constants $C,\gamma >0$ such that
		\begin{equation*}
			k\geq \gamma\ \Rightarrow\ \|\sigma_P(k)\varphi\|\geq C(1+\lambda_k)^\gamma,\ \forall \varphi\in E_{\lambda_k},\ \|\varphi\|_{L^2}=1.
		\end{equation*}
	\end{theorem}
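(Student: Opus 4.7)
The plan is to prove both implications by translating the smoothness and distributional growth conditions \eqref{fourier_coef} and \eqref{fourier_coef-distr} on Fourier coefficients into bounds on the matrix symbol, exploiting the intertwining identity $\widehat{Pu}(k) = \sigma_P(k)\widehat{u}(k)$ and the isometric identification of the finite-dimensional eigenspace $E_{\lambda_k}$ with $\mathbb{C}^{d_k}$ via the orthonormal basis $\mathcal{B}_k$.

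For the sufficiency, I would assume the lower bound $\|\sigma_P(k)\varphi\| \geq C(1+\lambda_k)^\gamma$ for unit $\varphi \in E_{\lambda_k}$ and $k \geq \gamma$. This bound says precisely that $\sigma_P(k)$ is injective on the finite-dimensional space $\mathbb{C}^{d_k}$, hence invertible, with inverse satisfying $\|\sigma_P(k)^{-1}\| \leq C^{-1}(1+\lambda_k)^{-\gamma}$. Given $u \in \mathscr{D}'(M)$ with $Pu \in C^\infty(M)$, applying $\sigma_P(k)^{-1}$ to both sides of the intertwining identity yields
\[
\|\widehat{u}(k)\| \leq C^{-1}(1+\lambda_k)^{-\gamma}\|\widehat{Pu}(k)\|, \quad k \geq \gamma.
\]
Since $Pu$ is smooth, $\|\widehat{Pu}(k)\|$ decays faster than any polynomial in $(1+\lambda_k)$, and so does $\|\widehat{u}(k)\|$; by \eqref{fourier_coef} this forces $u \in C^\infty(M)$.

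For the necessity, I would argue by contrapositive. If the bound fails, then for each $j \in \mathbb{N}$ I can select an index $k_j \geq j$ and a unit vector $\varphi_{k_j} \in E_{\lambda_{k_j}}$ with $\|\sigma_P(k_j)\varphi_{k_j}\| < (1+\lambda_{k_j})^{-j}$, and after passing to a subsequence I may assume $(k_j)$ is strictly increasing. I then build the candidate counterexample $u$ by prescribing Fourier coefficients: set $\widehat{u}(k_j)$ equal to the coefficient vector of $\varphi_{k_j}$ in $\mathcal{B}_{k_j}$, and $\widehat{u}(k) = 0$ otherwise. The uniform bound $\|\widehat{u}(k_j)\| = 1$ places $u$ in $\mathscr{D}'(M)$ by \eqref{fourier_coef-distr}, while the absence of decay in $\|\widehat{u}(k_j)\|$ prevents $u$ from being smooth. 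Simultaneously, $\widehat{Pu}(k_j) = \sigma_P(k_j)\varphi_{k_j}$ satisfies $\|\widehat{Pu}(k_j)\| < (1+\lambda_{k_j})^{-j}$ while $\widehat{Pu}(k) = 0$ elsewhere; for any fixed $N$, once $j \geq N$ one has $(1+\lambda_{k_j})^{-j} \leq (1+\lambda_{k_j})^{-N}$, so $Pu \in C^\infty(M)$ by \eqref{fourier_coef}, contradicting global hypoellipticity of $P$.

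The only potentially delicate step is the construction of $u$ in the necessity direction: I need the formal series $\sum_j \varphi_{k_j}$ to genuinely define a distribution, which is ensured by \eqref{fourier_coef-distr} since the prescribed Fourier coefficients are of polynomial (in fact constant) growth. Everything else is bookkeeping: the isometry between $E_{\lambda_k}$ and $\mathbb{C}^{d_k}$ keeps the $L^2$-norm of $\varphi$ and the Euclidean norm of its coefficient vector in sync, so a spectral estimate on $\sigma_P(k)$ translates directly into control on Fourier coefficients.
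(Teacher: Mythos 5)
Your method is the right one and coincides with the proof the paper gives for the system analogue, Theorem~\ref{gh_est} (the paper itself cites \cite{KM2020} for Theorem~\ref{KM} rather than proving it): estimate $\|\widehat{u}(k)\|$ from the lower bound on the symbol in one direction, and in the other direction build a lacunary counterexample $u = \sum_j \varphi_{k_j}$ from vectors where the bound fails.

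There is, however, a sign inconsistency you should be aware of. The theorem as printed asks for $\|\sigma_P(k)\varphi\|\geq C(1+\lambda_k)^{+\gamma}$, but the correct condition --- the one appearing in Theorem~\ref{gh_est} for $n=1$, and the one actually needed --- is $\|\sigma_P(k)\varphi\|\geq C(1+\lambda_k)^{-\gamma}$. The positive-exponent version is false: the strongly invariant operator with symbol $\sigma(k)=(1+\lambda_k)^{-1}I_{d_k}$ (a bounded, moderate-growth multiplier) is globally hypoelliptic, yet $\|\sigma(k)\varphi\|\to 0$, so no positive-exponent lower bound holds. Your sufficiency step takes the printed $+\gamma$ at face value (and goes through a fortiori, since $+\gamma$ is the stronger hypothesis), but your necessity step silently negates the $-\gamma$ version: from ``the bound fails'' you extract $k_j\geq j$ and unit $\varphi_{k_j}$ with $\|\sigma_P(k_j)\varphi_{k_j}\|<(1+\lambda_{k_j})^{-j}$, which only follows from the failure of the $-\gamma$ bound. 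If one truly negated the printed $+\gamma$ condition, one would only obtain $\|\sigma_P(k_j)\varphi_{k_j}\|< C(1+\lambda_{k_j})^{+\gamma}$, which gives no decay at all and leaves $Pu$ possibly non-smooth, so the counterexample construction would collapse. The fix is simply to state and prove the theorem with the exponent $-\gamma$; with that correction your argument is complete and matches the paper's approach exactly (the paper's proof of Theorem~\ref{gh_est} also inserts factors $2^{-N}$ in the selection, which is a harmless convenience --- your $(1+\lambda_{k_j})^{-j}$ already gives superpolynomial decay since $\lambda_{k_j}\to\infty$).
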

	
	The next results in this section aim to establish the conditions required for a strongly invariant operator to be globally solvable. To achieve this, we will show the equivalence between global solvability and almost global hypoellipticity, following the approach proposed in \cite{Araujo2019}.
	
	\begin{proposition}\label{est_1}
		Let $P$ be a strongly invariant operator. Suppose that there exist $t\in\mathbb{R}$ and $C>0$ such that
		\begin{equation}\label{6.1}
			\|\sigma_P(k)\varphi\|\geq C(1+\lambda_k)^{t/\nu},\ \forall \varphi\in\ker\sigma_P(k)^\perp,\ \|\varphi\|_{L^2}=1,\ k\in\mathbb{N}_0.
		\end{equation}
		Then, for all $u\in\mathscr{D}'(M)$ , there exists $v\in\ker P$ such that
		\begin{equation}
			\forall s\in\mathbb{R},\ Pu\in H^s(M)\ \Rightarrow\ u-v\in H^{s+t}(M).
		\end{equation}
		In this case, the following estimate holds:
		\begin{equation*}
			\|u-v\|_{H^{s+t}}\leq C^{-1}\|Pu\|_{H^s}.
		\end{equation*}
	\end{proposition}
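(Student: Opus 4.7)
The plan is to work entirely on the Fourier side, exploiting for each $k \in \mathbb{N}_0$ the orthogonal decomposition $\mathbb{C}^{d_k} = \ker\sigma_P(k) \oplus \ker\sigma_P(k)^\perp$. Concretely, I would split
$$\widehat{u}(k) = a_k + b_k, \qquad a_k \in \ker\sigma_P(k), \quad b_k \in \ker\sigma_P(k)^\perp,$$
and define the candidate $v$ as the object whose Fourier coefficients are $\widehat{v}(k) := a_k$.

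The first step is to verify that $v$ is actually a distribution lying in $\ker P$. Because $u \in \mathscr{D}'(M)$, its coefficients satisfy a polynomial bound by \eqref{fourier_coef-distr}, and the pointwise inequality $\|a_k\| \leq \|\widehat{u}(k)\|$ (orthogonal projection is contractive) transfers this bound to $v$. Condition (d) of the definition of strongly invariant operator then yields $\widehat{Pv}(k) = \sigma_P(k)a_k = 0$, so indeed $v \in \ker P$.

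For the quantitative step, observe that $\widehat{u-v}(k) = b_k \in \ker\sigma_P(k)^\perp$. Applying the hypothesis \eqref{6.1} to $\varphi = b_k/\|b_k\|$ when $b_k \neq 0$ (trivial otherwise) gives
$$\|b_k\| \leq C^{-1}(1+\lambda_k)^{-t/\nu}\,\|\sigma_P(k)b_k\|,$$
and since $a_k \in \ker\sigma_P(k)$ we have $\sigma_P(k)b_k = \sigma_P(k)\widehat{u}(k) = \widehat{Pu}(k)$. Substituting into the Sobolev-norm characterization, assuming $Pu \in H^s(M)$,
$$\|u-v\|_{H^{s+t}}^2 = \sum_{k \in \mathbb{N}_0}(1+\lambda_k)^{2(s+t)/\nu}\|b_k\|^2 \leq C^{-2}\sum_{k \in \mathbb{N}_0}(1+\lambda_k)^{2s/\nu}\|\widehat{Pu}(k)\|^2 = C^{-2}\|Pu\|_{H^s}^2,$$
so the powers $(1+\lambda_k)^{2t/\nu}$ cancel precisely as needed, yielding both $u - v \in H^{s+t}(M)$ and the stated estimate.

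I do not anticipate a genuine obstacle. The delicate-looking point is whether the pointwise decomposition $\widehat{u}(k) = a_k + b_k$ is compatible with the distributional nature of $u$; but since each $\ker\sigma_P(k)$ is a fixed finite-dimensional subspace of $\mathbb{C}^{d_k}$ and the projection is a contraction, all required growth bounds are automatically inherited from those of $\widehat{u}(k)$, and no measurability or summability subtlety beyond Plancherel ever appears. The whole argument is then a straightforward coefficient-by-coefficient manipulation.
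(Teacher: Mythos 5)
Your argument is essentially identical to the paper's: the same orthogonal decomposition $\widehat{u}(k)$ into $\ker\sigma_P(k)$ and $\ker\sigma_P(k)^\perp$ components, the same application of \eqref{6.1} to the orthogonal part, and the same summation over $k$ to obtain the Sobolev estimate. You are slightly more careful than the paper in explicitly checking that $v$ is a well-defined distribution lying in $\ker P$, but the substance of the proof is the same.
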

	
	\begin{proof}
		Let $u\in\mathscr{D}'(M)$. For each $k\in\mathbb{N}_0$, we decompose
		$\widehat{u}(k) = \widehat{v}(k) + \widehat{w}(k),$ where $\widehat{v}(k) \in \ker \sigma_P(k)$ and $\widehat{w}(k) \in \ker \sigma_P(k)^\perp$. 		
		Applying \eqref{6.1} to $\widehat{w}(k)$, we obtain
		\begin{align*}
			(1+\lambda_k)^{t/\nu} \|\widehat{u}(k) - \widehat{v}(k)\| 
			= (1+\lambda_k)^{t/\nu} \|\widehat{w}(k)\| 
			& \leq C^{-1} \|\sigma_P(k) \widehat{w}(k)\| \\
			&= C^{-1} \|\sigma_P(k) \widehat{u}(k)\|.
		\end{align*}
		
		Squaring both sides and multiplying by $(1+\lambda_k)^{2s/\nu}$, we obtain
		\begin{equation*}
			(1+\lambda_k)^{2(t+s)/\nu} \|\widehat{u}(k) - \widehat{v}(k)\|^2 
			\leq C^{-2} (1+\lambda_k)^{2s/\nu} \|\sigma_P(k) \widehat{u}(k)\|^2.
		\end{equation*}
		
		Summing over $k\in\mathbb{N}_0$ and taking the square root, we conclude that
		\begin{equation*}
			\|u-v\|_{H^{s+t}} \leq C^{-1} \|Pu\|_{H^s}.
		\end{equation*}
		
		In particular, if $Pu\in H^s(M)$, then $u-v\in H^{s+t}(M)$, as desired.
	\end{proof}
		
	\begin{proposition}\label{agh_1}
		Suppose that, for all $s\in\mathbb{R}$ and $C>0$, condition \eqref{6.1} does not hold. Then, there exists $u\in\mathscr{D}'(M)$ such that $Pu\in C^\infty(M)$ but $u+v\notin C^\infty(M)$ for all $v\in\ker P$.
	\end{proposition}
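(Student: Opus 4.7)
The plan is to construct $u$ explicitly by prescribing its Fourier coefficients to be concentrated on a carefully chosen thin sequence $\{k_j\}_{j\in\mathbb{N}}$, in unit directions $\varphi_j\in\ker\sigma_P(k_j)^\perp$ that witness the failure of \eqref{6.1}. Because $\sigma_P(k_j)\varphi_j$ will decay faster than any polynomial in $(1+\lambda_{k_j})^{-1}$, the image $Pu$ will be smooth. On the other hand, for any $v\in\ker P$ the Fourier coefficient $\widehat{v}(k_j)$ lies in $\ker\sigma_P(k_j)$, which is orthogonal to $\varphi_j$; hence adding $v$ cannot cancel the nondecaying contribution of $u$ at the modes $k_j$, forcing $u+v$ outside $C^\infty(M)$.

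The first substep is extracting the indices $k_j$. For each $j\in\mathbb{N}$, the hypothesis applied with $t=-j\nu$ and $C=1/j$ gives some index $k$ and unit vector $\varphi\in\ker\sigma_P(k)^\perp$ with $\|\sigma_P(k)\varphi\|<j^{-1}(1+\lambda_k)^{-j}$. To arrange $k_1<k_2<\cdots\to\infty$, I claim that, for each pair $(t,C)$ for which \eqref{6.1} fails, it fails at infinitely many $k$. Suppose instead the set of such $k$ were finite, say $F$. For each $k\in F$, the restriction of $\sigma_P(k)$ to the finite-dimensional subspace $\ker\sigma_P(k)^\perp$ is injective, so $m_k:=\min\{\|\sigma_P(k)\varphi\|:\varphi\in\ker\sigma_P(k)^\perp,\,\|\varphi\|=1\}$ is strictly positive. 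Choosing $C'>0$ with $C'\leq C$ and $C'(1+\lambda_k)^{t/\nu}\leq m_k$ for every $k\in F$ would make \eqref{6.1} hold universally, contradicting the hypothesis. Hence I may select $k_j\uparrow\infty$ together with unit vectors $\varphi_j\in\ker\sigma_P(k_j)^\perp$ satisfying $\|\sigma_P(k_j)\varphi_j\|<j^{-1}(1+\lambda_{k_j})^{-j}$.

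Now define $u$ by $\widehat{u}(k_j)=\varphi_j$ and $\widehat{u}(k)=0$ for $k\notin\{k_j\}$. Since $\|\widehat{u}(k)\|\leq 1$, criterion \eqref{fourier_coef-distr} shows $u\in\mathscr{D}'(M)$ (in fact $u\in L^2(M)$). For the image, $\widehat{Pu}(k)=\sigma_P(k)\widehat{u}(k)$ vanishes off $\{k_j\}$, while $\|\widehat{Pu}(k_j)\|<j^{-1}(1+\lambda_{k_j})^{-j}$; given any $N\in\mathbb{N}$, only finitely many $j$ fail $j\geq N$, and for the rest $\|\widehat{Pu}(k_j)\|\leq(1+\lambda_{k_j})^{-N}$, so \eqref{fourier_coef} yields $Pu\in C^\infty(M)$. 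Finally, any $v\in\ker P$ satisfies $\sigma_P(k)\widehat{v}(k)=0$, so $\widehat{v}(k_j)\in\ker\sigma_P(k_j)$, and orthogonality with $\varphi_j$ gives
\[
\|\widehat{u+v}(k_j)\|^2=\|\varphi_j\|^2+\|\widehat{v}(k_j)\|^2\geq 1
\]
for every $j$. Since $\lambda_{k_j}\to\infty$, the Fourier coefficients of $u+v$ do not satisfy the rapid-decay criterion \eqref{fourier_coef}, so $u+v\notin C^\infty(M)$.

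The only delicate step is the infinitude claim in the extraction phase; once $k_j\to\infty$ is available, the verification that $Pu$ is smooth and that $u+v$ is not follows immediately from the Fourier-coefficient characterizations of $C^\infty(M)$ and $\mathscr{D}'(M)$, together with the orthogonal decomposition $\varphi_j\in\ker\sigma_P(k_j)^\perp$ versus $\widehat{v}(k_j)\in\ker\sigma_P(k_j)$.
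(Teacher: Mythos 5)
Your proof is correct and shares the overall strategy of the paper's: extract a sequence of modes $k_j$ and unit vectors $\varphi_j\in\ker\sigma_P(k_j)^\perp$ witnessing the failure of \eqref{6.1}, define $u$ supported on these modes, show $Pu$ is smooth from the superpolynomial decay of $\|\sigma_P(k_j)\varphi_j\|$, and use the Pythagorean identity (since $\varphi_j\perp\ker\sigma_P(k_j)\ni\widehat{v}(k_j)$) to conclude $u+v\notin C^\infty(M)$. The genuine difference is in the weight placed on $\varphi_j$. The paper takes $\widehat{u}(k_\ell)=(1+\lambda_{k_\ell})^{-(s+\rho/2)/\nu}\varphi_\ell$ with $\rho>\dim M$, so it must invoke the Weyl-type summability $\sum_k(1+\lambda_k)^{-\rho/\nu}<\infty$ to place $u$ in $H^s(M)$, and then shows $u+v\notin H^{s+\rho/2}(M)$. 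You simply take $\widehat{u}(k_j)=\varphi_j$, so $u\in L^2(M)$ trivially, and $\|\widehat{(u+v)}(k_j)\|\geq 1$ rules out smoothness with no Sobolev bookkeeping. Your version is cleaner and more elementary---it dispenses with the Weyl asymptotics entirely---at the cost of losing the (unused) extra precision that $u$ can be arranged to lie in any prescribed $H^s(M)$. You also supply an explicit justification for a step the paper disposes of as ``WLOG'': the witnessing indices can be taken strictly increasing to infinity because, for fixed $(t,C)$, if \eqref{6.1} failed at only finitely many $k$, shrinking $C$ (using that $\sigma_P(k)$ restricted to $\ker\sigma_P(k)^\perp$ is injective, hence bounded below on the unit sphere, at each of those finitely many $k$) would restore \eqref{6.1} for all $k$, contradicting the hypothesis.
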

	
	\begin{proof}
		Let $\rho>d = \dim(M)$ and fix $s\in\mathbb{R}$. We will show that there exists $u\in H^s(M)$ such that $Pu\in C^\infty(M)$ but $u+v\notin H^{s+\rho/2}(M)$ for all $v\in\ker P$.
		
		By hypothesis, for each $\ell\in\mathbb{N}$, there exist $k_\ell\in\mathbb{N}_0$ and $\varphi_\ell\in\ker\sigma_P(k_\ell)$ with $\|\varphi_\ell\|_{L^2}=1$ such that $\|\sigma_P(k_\ell)\varphi_\ell\|<2^{-\ell}(1+\lambda_{k_\ell})^{-\ell/\nu}$.
		
		Without loss of generality, we may assume that the sequence $\{\lambda_{k_\ell}\}_{\ell\in\mathbb{N}}$ is increasing. Define $u\in\mathscr{D}'(M)$ by
		\[
		\widehat{u}(k)=\begin{cases}
			(1+\lambda_{k_\ell})^{-(s+\rho/2)/\nu}\varphi_\ell,&\text{if } k=k_\ell \text{ for some } \ell\in\mathbb{N},\\
			0,&\text{otherwise}.
		\end{cases}
		\]
		
		Since, by Weyl's asymptotic formula, we have $
			\sum_{k=0}^\infty (1+\lambda_k)^{-\rho/\nu}<\infty$, it follows that for all $t\in\mathbb{R}$,
		\begin{align*}
			\sum_{k=0}^\infty (1+\lambda_k)^{2(s+t)/\nu}\|\widehat{u}(k)\|^2 
			&= \sum_{\ell=1}^\infty (1+\lambda_{k_\ell})^{2(s+t)/\nu} \frac{\|\varphi_\ell\|^2}{(1+\lambda_{k_\ell})^{2(s+\rho/2)/\nu}}\\
			&= \sum_{\ell=1}^{\infty}(1+\lambda_{k_\ell})^{2(t-\rho/2)/\nu}.
		\end{align*}
		
		Therefore, $u\in H^s(M)\setminus H^{s+\rho/2}(M)$. On the other hand, we have
		\begin{align*}
			\sum_{k=0}^\infty (1+\lambda_k)^{2t/\nu}\|\widehat{u}(k)\|^2 
			\leq \sum_{\ell=1}^\infty 2^{-2\ell}(1+\lambda_{k_\ell})^{-2(t-s-\rho/2-\ell)/\nu} < \infty,\quad \forall t\in\mathbb{R}.
		\end{align*}
		
		Thus, $Pu\in C^\infty(M)$. Furthermore, if $v\in\ker P$, the Pythagorean theorem gives
		\begin{equation*}
			\|\widehat{u}(k)+\widehat{v}(k)\|^2 = \|\widehat{u}(k)\|^2+\|\widehat{v}(k)\|^2 \geq \|\widehat{u}(k)\|^2,\quad \forall k\in\mathbb{N}_0.
		\end{equation*}
		This implies that if $u\notin H^s(M)$, then $u+v\notin H^s(M)$, completing the proof.
	\end{proof}
	
	As a direct consequence of the previous proposition, we obtain the following:
	
	\begin{theorem}\label{agh_est}
		A strongly invariant operator $P$ is almost globally hypoelliptic if and only if there exist $s\in\mathbb{R}$ and $C>0$ such that condition (\ref{6.1}) holds.
	\end{theorem}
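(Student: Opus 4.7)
The plan is to deduce the theorem directly from the two preceding results, Propositions \ref{est_1} and \ref{agh_1}, which together almost exactly furnish the two implications. The main tasks are unpacking the definition of almost global hypoellipticity into a statement about $u$ lying in $C^\infty(M) + \ker P$, and reconciling the slightly different ways in which the kernel appears in each proposition.

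For the sufficiency direction, I would assume condition \eqref{6.1} holds for some $t\in\mathbb{R}$ and $C>0$, and take an arbitrary $u\in\mathscr{D}'(M)$ with $Pu\in C^\infty(M)$. By Proposition \ref{est_1}, there exists $v\in\ker P$ such that $u-v\in H^{s+t}(M)$ whenever $Pu\in H^s(M)$. Since $Pu\in C^\infty(M)=\bigcap_{s\in\mathbb{R}}H^s(M)$, this yields $u-v\in H^{s+t}(M)$ for every $s\in\mathbb{R}$, hence $u-v\in C^\infty(M)$. Setting $w\doteq u-v$ gives $w\in C^\infty(M)$ with $Pw=Pu-Pv=Pu$, which is precisely almost global hypoellipticity.

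For the necessity direction, I would argue by contrapositive. Suppose \eqref{6.1} fails for every $t\in\mathbb{R}$ and every $C>0$. Proposition \ref{agh_1} then produces $u\in\mathscr{D}'(M)$ with $Pu\in C^\infty(M)$ but $u+v\notin C^\infty(M)$ for every $v\in\ker P$. Since $\ker P$ is a linear subspace, it is closed under $v\mapsto -v$, so the conclusion is equivalent to $u-v\notin C^\infty(M)$ for every $v\in\ker P$. If $P$ were almost globally hypoelliptic, there would exist $w\in C^\infty(M)$ with $Pw=Pu$; but then $v\doteq u-w\in\ker P$ and $u-v=w\in C^\infty(M)$, contradicting this conclusion. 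Therefore $P$ is not almost globally hypoelliptic, completing the contrapositive.

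The argument is essentially a packaging step rather than new analysis, so I do not expect any real obstacle; the only point requiring care is the symmetry observation that $u+v$ ranges over the same coset as $u-v$ when $v$ ranges over $\ker P$, which is needed to align the statement of Proposition \ref{agh_1} with the $Pu=Pv$ formulation in Definition \ref{global-prop-operators}(2).
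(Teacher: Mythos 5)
Your proof is correct and follows essentially the same route that the paper indicates, which is to read Theorem \ref{agh_est} as an immediate consequence of Propositions \ref{est_1} and \ref{agh_1}. The paper labels the theorem "a direct consequence of the previous proposition" without writing the argument out; your version supplies the missing unpacking, including the correct observation that for the sufficiency direction the $v\in\ker P$ produced by Proposition \ref{est_1} is independent of $s$, so that $u-v$ lands in $\bigcap_s H^{s+t}(M)=C^\infty(M)$, and for the necessity direction the sign-symmetry of $\ker P$ reconciles the $u+v$ phrasing of Proposition \ref{agh_1} with the $Pu=Pv$ phrasing of Definition \ref{global-prop-operators}(2).
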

	
	\begin{theorem}\label{equiv_gs_agh}
		A strongly invariant operator $P$ is globally solvable if and only if it is almost globally hypoelliptic.
	\end{theorem}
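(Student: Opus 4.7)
My plan is to establish the equivalence by proving the two implications separately, leveraging Proposition \ref{est_1} (equivalently, Theorem \ref{agh_est}) together with the Fourier-coefficient characterizations of $C^\infty(M)$ and $\mathscr{D}'(M)$ already developed in Section \ref{overview}.

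For the direction ``almost globally hypoelliptic $\Rightarrow$ globally solvable'', I would show that $P(C^\infty(M))$ is sequentially closed in $C^\infty(M)$. Take $f_n = P u_n \to f$ in $C^\infty(M)$ with $u_n \in C^\infty(M)$, and construct a \emph{canonical preimage} $h_n \in C^\infty(M)$ by setting $\widehat{h_n}(k)$ equal to the orthogonal projection of $\widehat{u_n}(k)$ onto $\ker\sigma_P(k)^\perp$, so that $Ph_n = Pu_n = f_n$ while $\widehat{h_n}(k) \in \ker \sigma_P(k)^\perp$. Because this projection is linear in $k$, the kernel part of $u_n - u_m$ is precisely the kernel part of $u_n$ minus that of $u_m$, so applying Proposition \ref{est_1} to the difference yields
\begin{equation*}
    \|h_n - h_m\|_{H^{s+t}} \leq C^{-1}\|f_n - f_m\|_{H^s} \quad \text{for every } s \in \mathbb{R}.
\end{equation*}
Since $\{f_n\}$ is Cauchy in every $H^s(M)$, this makes $\{h_n\}$ Cauchy in every $H^{s+t}(M)$, hence in $C^\infty(M)$. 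Letting $h := \lim h_n \in C^\infty(M)$ and using continuity of $P$ on $C^\infty(M)$ (which follows from the moderate growth of $\sigma_P$), we conclude $Ph = f$, so $f \in P(C^\infty(M))$.

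For ``globally solvable $\Rightarrow$ almost globally hypoelliptic'', I take $u \in \mathscr{D}'(M)$ with $Pu \in C^\infty(M)$ and approximate $u$ by its Fourier truncations $u^{(N)} := \sum_{k \leq N}\sum_{\ell=1}^{d_k}\widehat{u}(k)_\ell \varphi_\ell^k \in C^\infty(M)$. A direct Fourier computation gives $\widehat{Pu^{(N)}}(k) = \sigma_P(k)\widehat{u}(k) = \widehat{Pu}(k)$ for $k \leq N$ and $0$ otherwise, so the tail $\widehat{Pu - Pu^{(N)}}(k)$ equals $\widehat{Pu}(k)$ for $k > N$. The rapid decay in \eqref{fourier_coef} then forces $Pu^{(N)} \to Pu$ in $C^\infty(M)$. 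Since each $Pu^{(N)} \in P(C^\infty(M))$ and this image is closed by hypothesis, $Pu \in P(C^\infty(M))$, which is exactly almost global hypoellipticity.

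I expect the harder direction to be ``AGH $\Rightarrow$ globally solvable'', since it requires lifting a Cauchy sequence in $C^\infty(M)$ to a Cauchy sequence of preimages in $C^\infty(M)$, despite the absence of a continuous right inverse for $P$ in general. The key is the canonical-preimage construction, which exploits the fact that orthogonal projection onto $\ker\sigma_P(k)^\perp$ commutes with differences, so the Proposition \ref{est_1} estimate transfers cleanly from $u_n$ to $u_n - u_m$. The reverse direction is essentially a trick: Fourier truncation automatically produces a smooth approximating sequence inside the image, so only closedness is required.
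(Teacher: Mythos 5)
Your proof is correct, and the direction ``globally solvable $\Rightarrow$ almost globally hypoelliptic'' is handled by a genuinely different and noticeably more elementary argument than the paper's. The paper introduces the closed subspace $\mathcal{F}=\{u\in C^\infty(M):\widehat{u}(k)\in\ker\sigma_P(k)^\perp\text{ for all }k\}$, observes that $P:\mathcal{F}\to P(C^\infty(M))$ is a continuous bijection between FS spaces, invokes the Open Mapping Theorem to get a continuous inverse, unwinds the continuity into the quantitative estimate \eqref{6.1}, and finally calls on Theorem \ref{agh_est} to conclude AGH. Your route skips all of this: given $u\in\mathscr{D}'(M)$ with $Pu\in C^\infty(M)$, the Fourier truncations $u^{(N)}$ are smooth, $Pu^{(N)}$ agrees with $Pu$ on modes $k\le N$ and vanishes above, and the rapid decay of $\widehat{Pu}(k)$ makes $Pu^{(N)}\to Pu$ in every $H^s$ and hence in $C^\infty(M)$; closedness of $P(C^\infty(M))$ (equivalently sequential closedness, since $C^\infty(M)$ is a Fréchet space) then puts $Pu$ in the image, which is exactly AGH. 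This avoids Open Mapping entirely; the trade-off is that the paper's longer proof actually produces the estimate \eqref{6.1} as a by-product, which is information the authors want downstream, whereas yours only gives the qualitative conclusion and would need Theorem \ref{agh_est} afterwards to recover the estimate. Interestingly, this also reverses your own expectation about which direction is harder: in the paper it is $\Rightarrow$ that carries the heavy machinery.

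For the other direction your construction is essentially the paper's in a different guise. The paper picks, for each $k$, the unique preimage $\widehat{u}(k)\in\ker\sigma_P(k)^\perp$ of $\widehat{f}(k)$ (possible because $\widehat{f}(k)$ lies in the closed, finite-dimensional image of $\sigma_P(k)$) and uses the estimate to show $u$ is smooth; you instead project each approximant $u_\ell$ onto $\bigoplus_k\ker\sigma_P(k)^\perp$ to get canonical preimages $h_\ell$, show they form a Cauchy sequence via Proposition \ref{est_1} applied to differences, and pass to the limit. Since the paper's $\widehat{u}(k)$ is precisely $\lim_\ell\widehat{h_\ell}(k)$, the two arguments converge to the same object; yours packages the modal construction as a completeness argument in $C^\infty(M)$. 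One small point worth stating explicitly in your write-up: you should record that $h_n\in C^\infty(M)$ because $\|\widehat{h_n}(k)\|\le\|\widehat{u_n}(k)\|$ (orthogonal projections are contractions), so the rapid decay of $\widehat{u_n}$ transfers to $\widehat{h_n}$.
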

	
	\begin{proof}
		\noindent($\Leftarrow$) Suppose that $P$ is almost globally hypoelliptic.
		Let $\{u_\ell\}_{\ell\in\mathbb{N}}$ be a sequence in $C^\infty(M)$ and let $f\in C^\infty(M)$ be such that $Pu_\ell\to f$ in $C^\infty(M)$. Then,
		\begin{equation*}
			\lim\limits_{\ell\to\infty}\sigma_P(k)\widehat{u}_\ell(k) = \widehat{f}(k), \text{ for all } k\in\mathbb{N}_0.
		\end{equation*}
		
		Since each $\sigma_P(k):E_{\lambda_k}\to E_{\lambda_k}$ has a closed image (as $E_{\lambda_k}$ is finite-dimensional), there exists $\widehat{u}(k)$ such that $\sigma_P(k)\widehat{u}(k)=\widehat{f}(k)$. Furthermore, we can choose $\widehat{u}(k)$ such that $\widehat{u}(k) \in \ker\sigma_P(k)^\perp$.  
		
		By Theorem \ref{agh_est}, there exist constants $C>0$ and $t\in\mathbb{R}$ such that
		\begin{equation*}
			\|\widehat{u}(k)\|\leq C^{-1}(1+\lambda_k)^{-t/\nu}\|\widehat{f}(k)\|.
		\end{equation*}

		Since $f\in C^\infty(M)$, this bound implies that $u\in C^\infty(M)$. Moreover, we have $Pu=f$, which shows that $f\in P(C^\infty(M))$. Hence, $P$ is globally solvable.  
		
		\medskip\noindent
		($\Rightarrow$) Suppose that $P$ is globally solvable, i.e., $P(C^\infty(M))$ is closed.
		We first show that the space $\mathcal{F}=\{u\in C^\infty(M)\,:\, \widehat{u}(k)\in\ker\sigma_P(k)^\perp,\ \forall k\in\mathbb{N}_0\}$	is closed in $C^\infty(M)$. 
		
		Indeed, if $\{u_\ell\}_{\ell\in\mathbb{N}}$ is a sequence such that $u_\ell\to u$ in $C^\infty(M)$, then $\widehat{u}_\ell(k)\to\widehat{u}(k)$ for all $k\in\mathbb{N}_0$. Since each $\ker\sigma_P(k)^\perp$ is closed in $E_{\lambda_k}$, it follows that $u\in\mathcal{F}$.  
		
		Since $P:\mathscr{D}'(M)\to\mathscr{D}'(M)$ is continuous, the Closed Graph Theorem implies that the restriction $P:C^\infty(M)\to C^\infty(M)$ is also continuous. This, in turn, implies that the induced operator $P:\mathcal{F}\to P(C^\infty(M))$ is a continuous linear bijection between FS spaces. By the Open Mapping Theorem, its inverse  $P^{-1}: P(C^\infty(M))\to\mathcal{F}$ is also continuous.  
		
		By the definition of the projective limit topology, this continuity means that for each $t>0$, there exist $s>0$ and $C>0$ such that
		\begin{equation*}
			\|P^{-1}v\|_{H^t}\leq C\|v\|_{H^s},\quad \forall v\in P(C^\infty(M)).
		\end{equation*}
		Equivalently, for all $u\in\mathcal{F}$,
		\begin{equation*}
			\|u\|_{H^t}\leq C\|Pu\|_{H^s}.
		\end{equation*}
		
		In particular, for each $k\in\mathbb{N}_0$ and $\varphi\in\ker\sigma_P(k)^\perp$, we have $\varphi\in\mathcal{F}$, so
		\begin{equation*}
			(1+\lambda_k)^t\|\varphi\|=\|\varphi\|_{H^t}\leq C\|\sigma_P(k)\varphi\|_{H^s}=C(1+\lambda_k)^s\|\sigma_P(k)\varphi\|.
		\end{equation*}
		This implies that
		\begin{equation*}
			\|\sigma_P(k)\varphi\|\geq C^{-1}(1+\lambda_k)^{t-s}\|\varphi\|,\quad \forall \varphi\in\ker\sigma_P(k)^\perp, \quad \forall k\in\mathbb{N}_0.
		\end{equation*}
		
		By Theorem \ref{agh_est}, this establishes that $P$ is almost globally hypoelliptic.
	\end{proof}

	We now observe that condition (\ref{6.1}) holds for some $t\in\mathbb{R}$ and $C>0$ if and only if, for every $k\in\mathbb{N}_0$, the number
	\begin{equation*}
		m(\sigma_P(k)) \doteq \inf\{\|\sigma_P(k)\varphi\| \mid \varphi\in\ker\sigma_P(k)^\perp, \|\varphi\|_{L^2} =1\}  \geq C(1+\lambda_k)^{t/\nu}.
	\end{equation*}
	
	\begin{theorem}
		A strongly invariant operator $P$ is globally solvable if and only if there exist positive constants $C,\gamma$ such that
		\begin{equation*}
			k\geq \gamma \Rightarrow m(\sigma_P(k))\geq C(1+\lambda_k)^\gamma.
		\end{equation*}
	\end{theorem}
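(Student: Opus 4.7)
The strategy is to chain together the equivalences already developed in this section. Specifically, Theorem \ref{equiv_gs_agh} identifies global solvability of $P$ with almost global hypoellipticity; Theorem \ref{agh_est} then identifies the latter with the existence of $t \in \mathbb{R}$ and $C > 0$ satisfying condition \eqref{6.1}; and the observation immediately preceding the theorem re-expresses \eqref{6.1} as the estimate $m(\sigma_P(k)) \geq C(1+\lambda_k)^{t/\nu}$ valid for every $k \in \mathbb{N}_0$. The only new content is therefore to pass between this ``universal'' bound and the ``eventual'' bound $k \geq \gamma \Rightarrow m(\sigma_P(k)) \geq C(1+\lambda_k)^{\gamma}$ appearing in the statement.

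The key observation enabling this translation is that $m(\sigma_P(k))$, for each fixed $k$, is either $+\infty$ (when $\sigma_P(k)$ is the zero matrix, so that $\ker\sigma_P(k)^\perp = \{0\}$ and the infimum is taken over an empty set) or strictly positive. Indeed, in the latter case $\sigma_P(k)$ is injective on $\ker\sigma_P(k)^\perp$, and the continuous positive function $\varphi \mapsto \|\sigma_P(k)\varphi\|$ attains a positive minimum on the compact unit sphere of this finite-dimensional subspace. Consequently, any lower bound of the form $m(\sigma_P(k)) \geq C(1+\lambda_k)^\alpha$ valid for $k \geq \gamma$ extends to all $k \in \mathbb{N}_0$ by shrinking $C$ so as to absorb the finitely many values at indices $k < \gamma$; the converse restriction is immediate, and the same bookkeeping converts between the exponent conventions $\alpha = t/\nu$ and $\gamma$.

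I do not anticipate any significant obstacle: the whole argument is essentially a bookkeeping exercise built on top of the previously established equivalences, and the only technical point worth isolating is the compactness argument guaranteeing that $m(\sigma_P(k))$ is strictly positive whenever $\sigma_P(k) \neq 0$, which allows the finite set of exceptional indices to be freely absorbed into the multiplicative constant.
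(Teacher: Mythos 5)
Your proposal is correct and is precisely the argument the paper leaves implicit: the paper gives no separate proof of this theorem, presenting it as an immediate consequence of Theorem~\ref{equiv_gs_agh}, Theorem~\ref{agh_est}, and the displayed observation relating condition~\eqref{6.1} to $m(\sigma_P(k))$. You correctly isolate the only point that needs an argument, namely that $m(\sigma_P(k))$ is $+\infty$ or strictly positive for each fixed $k$ (compactness of the unit sphere in the finite-dimensional space $\ker\sigma_P(k)^\perp$), so the finitely many indices $k<\gamma$ can be absorbed by shrinking~$C$.

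One small caution about the sentence ``the same bookkeeping converts between the exponent conventions $\alpha=t/\nu$ and $\gamma$.'' Condition~\eqref{6.1} allows $t\in\mathbb{R}$ to be negative, whereas the theorem asks for $\gamma>0$; moving from $t/\nu$ to $\gamma$ is only a matter of bookkeeping because one may always \emph{decrease} the exponent (using $1+\lambda_k\geq 1$), so the intended reading of the displayed estimate must be $m(\sigma_P(k))\geq C(1+\lambda_k)^{-\gamma}$, with a negative exponent, exactly as in the system-level Theorem~\ref{agh_gs_system}(a). As literally printed (positive exponent $\gamma$), the forward implication ``globally solvable $\Rightarrow$ estimate'' would fail, since global solvability does not force $m(\sigma_P(k))$ to grow. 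Your proof is correct for the intended $-\gamma$ version, but it would strengthen the write-up to state explicitly that only a one-sided change of exponent is used.
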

	
	It is clear that global hypoellipticity implies almost global hypoellipticity. Consequently, every globally hypoelliptic operator is also globally solvable. The condition in the theorem above is, as expected, a weaker version of the condition in Theorem \ref{KM}.
	
	\begin{remark}
	An interesting refinement occurs when $P$ is a normal operator. In this case, each $\sigma_P(k)$ is a normal matrix, meaning that for each eigenspace $E_{\lambda_k}$, there exists an orthonormal basis in which $\sigma_P(k)$ is diagonal:
	\[
	\sigma_P(k) = \text{diag}(\mu_1(k),\dots,\mu_{d_k}(k)), \quad \mu_\ell(k)\in\mathbb{C}, \quad \ell=1,\dots,d_k.
	\]
	
	Here, $m(\sigma_P(k))$ corresponds to the smallest nonzero eigenvalue of $\sigma_P(k)$ in norm, i.e., $m(\sigma_P(k)) = |\mu_{\ell_0}(k)|$ for some index $\ell_0$. This reformulation shows that the global solvability of $P$ can be directly characterized by the asymptotic behavior of its nonzero eigenvalues. This observation aligns with Theorem 3.5 of \cite{KMR20b}, which addresses the global properties of left-invariant vector fields on compact Lie groups.
	\end{remark}
	
%==================================================
%==================================================
\section{Global Properties for Systems of Strongly Invariant Operators}
%==================================================
%==================================================

	In this section, we extend the notions of global regularity and solvability to systems of strongly invariant operators, motivated by results from \cite{AFR2022,AFR2024}. 
	
	\begin{definition}
		Let $\mathbb{P}=(P_1,\dots,P_n):\mathscr{D}'(M)\to (\mathscr{D}'(M))^n$ be a system of linear operators. We say that:
		\begin{enumerate}
			\item $\mathbb{P}$ is \textit{globally hypoelliptic} if $u\in\mathscr{D}'(M)$ and $P_j u\in C^\infty(M)$, for $1\leq j \leq n$, imply that $u\in C^\infty(M).$ Equivalently, if $u\in\mathscr{D}'(M)$ and $\mathbb{P}u\in (C^\infty(M))^n$ then $u\in C^\infty(M)$.  
			
			\item $\mathbb{P}$ is \textit{almost globally hypoelliptic} if $u\in\mathscr{D}'(M)$ and $\mathbb{P}u\in (C^\infty(M))^n$ imply the existence of $v\in C^\infty(M)$ such that $\mathbb{P}u=\mathbb{P}v.$ 
			
			\item $\mathbb{P}$ is \textit{globally solvable} if the induced map $\mathbb{P}:C^\infty(M)\to (C^\infty(M))^n$ has a closed image.
		\end{enumerate}
	\end{definition}

	Let $\mathbb{P}=(P_1,\dots,P_n)$ be a system of strongly invariant operators, where each $P_j:\mathscr{D}'(M)\to \mathscr{D}'(M)$ for $j=1,\dots,n$. For simplicity, we will denote the matrix symbol $\sigma_{P_j}(k)$ of each operator $P_j$ by $\sigma_j(k)$ for all $k\in\mathbb{N}$ and $j\in\{1,\dots,n\}$. Accordingly, the matrix symbol of the system $\mathbb{P}$ is given by  
	\begin{equation*}
		\sigma_{\mathbb{P}}(k) = (\sigma_1(k), \ldots, \sigma_n(k)), \quad k\in\mathbb{N}.
	\end{equation*}
	
	Moreover, for each $k\in\mathbb{N}_0$, we define $\mathbb{P}(k):E_{\lambda_k}\to (E_{\lambda_k})^n$ as the restriction of $\mathbb{P}$ to the eigenspace $E_{\lambda_k}$.

	Since $\ker\mathbb{P} = \bigcap_{j=1}^n\ker P_j,$  then we have $\ker \mathbb{P}(k) = \bigcap_{j=1}^n\ker\sigma_j(k)$ for each $k\in\mathbb{N}_0$. By the invariance property of the operators, for each $j=1,\dots,n$ and $k\in\mathbb{N}_0$, we have $P_j(E_{\lambda_k})\subset E_{\lambda_k}$, so the restriction $P_j|_{E_{\lambda_k}}$ acts as a linear operator on the finite-dimensional space $E_{\lambda_k}$. Fixing a basis $\mathcal{B}_k = \{\varphi^k_1, \dots, \varphi^k_{d_k}\}$ for $E_{\lambda_k}$, we represent the matrix of $P_j|_{E_{\lambda_k}}$ in this basis as  
\begin{equation*}
	\sigma_j(k)=\begin{bmatrix}
		\sigma_j(k)_{1,1}&\cdots & \sigma_j(k)_{1,d_k}\\
		\vdots & \ddots & \vdots\\
		\sigma_j(k)_{d_k,1}&\cdots& \sigma_j(k)_{d_k,d_k}
	\end{bmatrix}.
\end{equation*}

Now, given $u\in\mathscr{D}'(M)$ and $f_j\in C^\infty(M)$ such that $P_j u = f_j$ for all $j=1,\dots,n$, we expand these functions in terms of the eigenbasis as  
\begin{equation*}
	u = \sum_{k\in\mathbb{N}_0}\sum_{\ell=1}^{d_k}\widehat{u}(k)_\ell\varphi_\ell^k,  
	\quad \text{and} \quad  
	f_j = \sum_{k\in\mathbb{N}_0}\sum_{\ell=1}^{d_k}\widehat{f_j}(k)_\ell\varphi_\ell^k, \quad j=1,\dots,n.
\end{equation*}

	For each $k\in\mathbb{N}_0$, solving for $\widehat{u}(k)$ reduces to the overdetermined system of linear equations:
	\begin{equation}\label{system}
		\sigma(k) \cdot \widehat{u}(k) =
		\begin{bmatrix}
			\sigma_1(k)\\ \vdots \\ \sigma_n(k)
		\end{bmatrix} \cdot \widehat{u}(k)
		=
		\begin{bmatrix} 
			\widehat{f_1}(k)\\ \vdots \\ \widehat{f_n}(k)
		\end{bmatrix} = F(k),
	\end{equation}
	where  $\widehat{u}(k) = [\widehat{u}(k)_1 \ \cdots \ \widehat{u}(k)_{d_k}]^T$ and $\widehat{f_j}(k) =
		[ \widehat{f_j}(k)_1 \ \cdots \ \widehat{f_j}(k)_{d_k}]^T$.

	For this system to have a unique solution, it is necessary that  
	\begin{equation*}
		\bigcap_{j=1}^{n}\ker\sigma_j(k)=\{0\}.
	\end{equation*}
	
	Indeed, given functions $u,v$ satisfying $P_ju = P_jv = f_j$ for all $j=1,\dots,n$, we observe that their difference satisfies  $	u-v\in\bigcap_{j=1}^n\ker\sigma_j(k),$ for all $k\in\mathbb{N}_0.$ 
	Thus, uniqueness of the solution is equivalent to requiring that the intersection of the kernels is trivial. We now define the set
	\begin{equation*}
		\mathcal{Z}=\left\{k\in\mathbb{N}_0\,:\, \ker\mathbb{P}(k)\neq \{0\}\right\}.
	\end{equation*}
	The next result provides a necessary condition for the global hypoellipticity of $\mathbb{P}$.
	
	\begin{proposition}\label{prop_Z}
		If the system $\mathbb{P}$ is globally hypoelliptic, then $\mathcal{Z}$ is finite.
	\end{proposition}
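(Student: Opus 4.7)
I would prove the contrapositive: if $\mathcal{Z}$ is infinite, then $\mathbb{P}$ fails to be globally hypoelliptic. The plan is to construct a singular distribution $u$ whose Fourier coefficients live entirely in the kernels $\ker\mathbb{P}(k)$, so that $\mathbb{P}u = 0 \in (C^\infty(M))^n$ while $u \notin C^\infty(M)$.

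First I would enumerate $\mathcal{Z} = \{k_\ell\}_{\ell\in\mathbb{N}}$ (passing to an increasing subsequence of $\lambda_{k_\ell}$). For each $\ell$, since $\ker\mathbb{P}(k_\ell) = \bigcap_{j=1}^{n}\ker\sigma_j(k_\ell) \neq \{0\}$, I would choose a unit vector $\varphi_\ell \in \ker\mathbb{P}(k_\ell) \subset E_{\lambda_{k_\ell}}$ with $\|\varphi_\ell\|_{L^2} = 1$. Then I would define $u \in \mathscr{D}'(M)$ via its Fourier coefficients by
\begin{equation*}
\widehat{u}(k) = \begin{cases} \varphi_\ell, & \text{if } k = k_\ell \text{ for some } \ell \in \mathbb{N}, \\ 0, & \text{otherwise.} \end{cases}
\end{equation*}

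Next I would verify the three properties. That $u \in \mathscr{D}'(M)$ follows from the characterization \eqref{fourier_coef-distr}, since $\|\widehat{u}(k)\| \leq 1$ for all $k$, so the bound holds with $N = 0$. That $P_j u = 0$ for every $j = 1,\dots,n$ follows because $\widehat{P_j u}(k) = \sigma_j(k)\widehat{u}(k)$: for $k = k_\ell$, this is $\sigma_j(k_\ell)\varphi_\ell = 0$ since $\varphi_\ell \in \ker \sigma_j(k_\ell)$; for $k \notin \mathcal{Z}$, it is $\sigma_j(k) \cdot 0 = 0$. Hence $\mathbb{P}u = 0 \in (C^\infty(M))^n$. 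That $u \notin C^\infty(M)$ follows from \eqref{fourier_coef}: the coefficient norms $\|\widehat{u}(k_\ell)\| = 1$ do not decay as $\lambda_{k_\ell} \to \infty$, so the rapid-decay condition fails for any $N \geq 1$.

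Putting these together contradicts the global hypoellipticity of $\mathbb{P}$, completing the proof of the contrapositive. There is no real obstacle here — the spectral framework set up in Section 2 does all the work, and the argument is essentially the standard construction of a singular solution of a homogeneous system with an infinite joint kernel.
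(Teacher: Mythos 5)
Your proof is correct and is essentially the same as the paper's: both construct a distribution $u$ supported on the infinitely many indices in $\mathcal{Z}$ by choosing a unit vector in each joint kernel $\ker\mathbb{P}(k)$, so that $\mathbb{P}u=0$ yet $\|\widehat{u}(k)\|=1$ along $\mathcal{Z}$ prevents $u$ from being smooth. You simply write out the verification steps (membership in $\mathscr{D}'(M)$, $\mathbb{P}u=0$, non-smoothness) that the paper leaves implicit.
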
	
	\begin{proof}
		Suppose, by contradiction, that $\mathcal{Z}$ is infinite. For each $k\in\mathcal{Z}$, choose a nonzero element $v(k)\in\bigcap_{j=1}^n\ker\sigma_j(k)$ with $\|v(k)\|=1$. Then, the distribution  
		$u=\sum_{k\in\mathcal{Z}}v(k) \in \mathscr{D}'(M)\setminus C^\infty(M)$ and satisfies $P_j u = 0$ for $j=1,\dots,n$.
	\end{proof}

	Considering the 2-norm on each product space $(E_{\lambda_k})^n$, we establish a new characterization of the global hypoellipticity of the system $\mathbb{P}$. Our proof follows a similar approach to Theorem 3.3 of \cite{KM2020} and adapts techniques from Proposition 5.2 of \cite{AFR2024}, extending both results. More precisely, we provide necessary and sufficient conditions for the injectivity of each $\mathbb{P}(k)$ (except for finitely many indices) and, in such cases, obtain explicit control over the norm of its inverse.
	
	\begin{theorem}\label{gh_est}
		The system $\mathbb{P}$ is globally hypoelliptic if and only if there exist constants $C,\gamma>0$ such that
		\begin{equation}\label{GH-system-condition}
			k\geq\gamma \Rightarrow \|\mathbb{P}(k)\varphi\| = \left( \sum_{j=1}^n \|\sigma_j(k)\varphi\|^2 \right)^{1/2} \!\!\! \geq C(1+\lambda_k)^{-\gamma},\  \forall \varphi\in E_{\lambda_k}, \|\varphi\|=1.
		\end{equation}
	\end{theorem}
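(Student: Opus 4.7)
The plan is to mirror the structure of Theorem \ref{KM} from the single-operator case, viewing $\mathbb{P}(k):E_{\lambda_k}\to (E_{\lambda_k})^n$ as a single finite-dimensional linear map and exploiting the $2$-norm on the product space so that for $\widehat{u}(k)\in E_{\lambda_k}$ one has the crucial identity $\|\mathbb{P}(k)\widehat{u}(k)\|^2 = \sum_{j=1}^n \|\widehat{P_j u}(k)\|^2$, which reduces the system to a single scalar comparison.

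For the sufficient direction I would take $u\in\mathscr{D}'(M)$ with each $P_j u\in C^\infty(M)$. First observe that \eqref{GH-system-condition} forces $\mathbb{P}(k)$ to be injective on $E_{\lambda_k}$ for $k\geq\gamma$, since a unit kernel element would immediately violate the lower bound. For such $k$ with $\widehat{u}(k)\neq 0$, applying \eqref{GH-system-condition} to $\widehat{u}(k)/\|\widehat{u}(k)\|$ yields
\[
\|\widehat{u}(k)\| \leq C^{-1}(1+\lambda_k)^{\gamma}\Bigl(\sum_{j=1}^n \|\widehat{P_j u}(k)\|^2\Bigr)^{1/2}.
\]
The rapid decay characterization \eqref{fourier_coef} applied to each $P_j u$ then makes the right-hand side $O((1+\lambda_k)^{\gamma - N})$ for every $N\in\mathbb{N}$, so $\widehat{u}$ decays rapidly and $u\in C^\infty(M)$ by \eqref{fourier_coef} again.

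For the necessary direction I would argue by contrapositive, constructing a counterexample to global hypoellipticity from the failure of \eqref{GH-system-condition}. Instantiating the negation with $(C,\gamma)=(1,\ell)$ for each $\ell\in\mathbb{N}$ produces indices $k_\ell\geq\ell$ and unit vectors $\varphi_\ell\in E_{\lambda_{k_\ell}}$ with $\|\mathbb{P}(k_\ell)\varphi_\ell\|<(1+\lambda_{k_\ell})^{-\ell}$. Since $k_\ell\geq\ell\to\infty$, after extracting a subsequence I may assume the $k_\ell$ are strictly increasing, and I would define $u\in\mathscr{D}'(M)$ by $\widehat{u}(k_\ell)=\varphi_\ell$ and $\widehat{u}(k)=0$ otherwise. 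Then $u$ is a distribution by \eqref{fourier_coef-distr} (its Fourier coefficients are bounded) but is not smooth since the coefficients do not decay, while $\|\widehat{P_j u}(k_\ell)\|\leq \|\mathbb{P}(k_\ell)\varphi_\ell\|<(1+\lambda_{k_\ell})^{-\ell}$ gives rapid decay and hence $P_j u\in C^\infty(M)$ for each $j$, contradicting global hypoellipticity.

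The only delicate point is the indexing step in the necessary direction: the choice $(C,\gamma)=(1,\ell)$ is tuned to accomplish two things at once, forcing $k_\ell\to\infty$ via $k_\ell\geq\ell$ and making $\|\mathbb{P}(k_\ell)\varphi_\ell\|$ decay faster than any polynomial, after which thinning to a strictly increasing subsequence ensures the defining formula for $\widehat{u}$ assigns at most one value per index. Apart from this bookkeeping, the argument is a direct transplant of the single-operator proof, with the $2$-norm on $(E_{\lambda_k})^n$ doing the work of packaging the $n$ symbol estimates into one.
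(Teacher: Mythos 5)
Your proof is correct and follows essentially the same approach as the paper's: the sufficient direction is the same rapid-decay bootstrap, and the necessary direction is the same contrapositive construction of a non-smooth $u$ with smooth $P_j u$. Your version is marginally more careful in explicitly thinning to a strictly increasing subsequence $\{k_\ell\}$ (which the paper leaves implicit), and you omit the paper's extra $2^{-N}$ factor in the negation, which is indeed unnecessary here since the $(1+\lambda_{k_\ell})^{-\ell}$ decay alone suffices.
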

	
	\begin{proof}
		Suppose first that the estimate holds, and let $u\in\mathscr{D}'(M)$ be such that $P_ju=f_j\in C^\infty(M)$ for all $j=1,\dots,n$. Then, for $k\geq\gamma$, we obtain
		\begin{equation*}
			C^2(1+\lambda_k)^{-2\gamma}\|\widehat{u}(k)\|^2\leq \sum_{j=1}^n\|\sigma_j(k)\widehat{u}(k)\|^2 = \sum_{j=1}^n\|\widehat{f_j}(k)\|^2.
		\end{equation*}
		
		Since each $f_j$ is smooth, there exists a constant $K>0$ such that, for all $N\in\mathbb{N}$, we have  
		\begin{equation*}
			\|\widehat{f_j}(k)\|\leq K(1+\lambda_k)^{-N-\gamma}, \ j=1,\dots,n.
		\end{equation*}
		
		Consequently,  
		\begin{equation*}
			\|\widehat{u}(k)\|^2\leq nK^2C^{-2}(1+\lambda_k)^{2\gamma}(1+\lambda_k)^{-2N-2\gamma} = nK^2C^{-2}(1+\lambda_k)^{-2N},
		\end{equation*}
		which implies that $u$ is smooth. Thus, $\mathbb{P}$ is globally hypoelliptic.  
		
		Conversely, suppose that the estimate does not hold. Then, for each $N\in\mathbb{N}$, there exist $k_N\geq N$ and $\varphi_N\in E_{\lambda_{k_N}}$ with $\|\varphi_N\|=1$ such that
		\begin{equation*}
			\left(\sum_{j=1}^n\|\sigma_j(k_N)\varphi_N\|^2\right)^{1/2} < 2^{-N}(1+\lambda_{k_N})^{-N}.
		\end{equation*}
		
		Note that  $u=\displaystyle\sum_{N\in\mathbb{N}}\varphi_N \in\mathscr{D}'(M)\setminus C^\infty(M)$. 
		Furthermore, for $k = k_N$, we obtain  
		\begin{equation*}
			\|\sigma_j(k)\widehat{u}(k)\|\leq \left(\sum_{j=1}^n\|\sigma_j(k)\widehat{u}(k)\|^2\right)^{1/2}< 2^{-N}(1+\lambda_{k_N})^{-N},
		\end{equation*}
		while for all other values of $k$, we have $\sigma_j(k)\widehat{u}(k)=0$. Therefore $P_j u \in C^\infty(M)$ for$j=1,\dots,n$, contradicting the global hypoellipticity of $\mathbb{P}$.
	\end{proof}
	
	Following the same reasoning as in the previous results, we now establish a characterization for the global solvability of systems of operators.
	
	\begin{theorem}\label{agh_gs_system}
		Let $\mathbb{P}$ be a system of strongly invariant operators. The following statements are equivalent:
		\begin{itemize}
			\item[(a)] There exist constants $C,\rho>0$ such that
			\begin{equation*}
				\left(\sum_{j=1}^{n}\|\sigma_{j}(k)\varphi\|^2\right)^{1/2} \!\!\! \geq C(1+\lambda_k)^{-\rho}\|\varphi\|, \text{ for } \varphi \in \ker\mathbb{P}(k)^\perp, \text{and } k\in\mathbb{N}_0.
			\end{equation*}
			\item[(b)] $\mathbb{P}$ is almost globally hypoelliptic.
			\item[(c)] $\mathbb{P}$ is globally solvable.
		\end{itemize}
	\end{theorem}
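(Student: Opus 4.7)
The plan is to closely mirror the arguments developed in Section 3 for a single strongly invariant operator, extending them to the system setting. I will establish the chain (a)$\Rightarrow$(b)$\Rightarrow$(c)$\Rightarrow$(a), where each step reduces to its scalar analogue (Propositions~\ref{est_1} and \ref{agh_1} and Theorem~\ref{equiv_gs_agh}) after reading $\|\mathbb{P}(k)\varphi\|=\bigl(\sum_{j=1}^n\|\sigma_j(k)\varphi\|^2\bigr)^{1/2}$ in place of $\|\sigma_P(k)\varphi\|$ and $\ker\mathbb{P}(k)=\bigcap_{j=1}^n\ker\sigma_j(k)$ in place of $\ker\sigma_P(k)$. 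This is the natural translation, since the system produces an overdetermined linear problem in each eigenspace while preserving the orthogonal decomposition that drove the scalar proofs.

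For (a)$\Rightarrow$(b), given $u\in\mathscr{D}'(M)$ with $\mathbb{P}u\in(C^\infty(M))^n$, I would decompose $\widehat{u}(k)=\widehat{v}(k)+\widehat{w}(k)$ with $\widehat{v}(k)\in\ker\mathbb{P}(k)$ and $\widehat{w}(k)\in\ker\mathbb{P}(k)^\perp$, so that $\mathbb{P}(k)\widehat{u}(k)=\mathbb{P}(k)\widehat{w}(k)$. Hypothesis (a) gives $\|\widehat{w}(k)\|\le C^{-1}(1+\lambda_k)^{\rho}\|\mathbb{P}(k)\widehat{u}(k)\|$, and the rapid decay of $\|\widehat{P_ju}(k)\|$ forces the function $w$ associated with $\{\widehat{w}(k)\}$ to be smooth; since $v=u-w\in\ker\mathbb{P}$, this yields almost global hypoellipticity. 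Conversely, if (a) fails I would adapt Proposition~\ref{agh_1}: extract $k_\ell$ and unit vectors $\varphi_\ell\in\ker\mathbb{P}(k_\ell)^\perp$ with $\|\mathbb{P}(k_\ell)\varphi_\ell\|<2^{-\ell}(1+\lambda_{k_\ell})^{-\ell}$ along a strictly increasing subsequence $\{\lambda_{k_\ell}\}$, and set $\widehat{u}(k_\ell)=(1+\lambda_{k_\ell})^{-(s+\rho/2)/\nu}\varphi_\ell$ and zero otherwise, for appropriate $s\in\mathbb{R}$ and $\rho>d/\nu$. Weyl's summability places $u$ in $H^s(M)\setminus H^{s+\rho/2}(M)$ and makes every $P_ju$ smooth, while the identity $\|\widehat{u}(k)+\widehat{v}(k)\|^2=\|\widehat{u}(k)\|^2+\|\widehat{v}(k)\|^2$, valid for $v\in\ker\mathbb{P}$, prevents $u+v\in C^\infty(M)$ for any such $v$, contradicting (b).

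The equivalence (b)$\Leftrightarrow$(c) is then obtained by reproducing the FS/DFS argument of Theorem~\ref{equiv_gs_agh} with targets in the product space $(C^\infty(M))^n$. For (b)$\Rightarrow$(c), a limit $\mathbb{P}u_\ell\to F$ in $(C^\infty(M))^n$ can be lifted eigenspace-by-eigenspace (each $\mathbb{P}(k)$ having closed range by finite-dimensionality) to $\widehat{u}(k)\in\ker\mathbb{P}(k)^\perp$, and the already established (a) propagates smoothness of $F$ to $u$, showing $F\in\mathbb{P}(C^\infty(M))$. For (c)$\Rightarrow$(a), I would introduce the closed FS subspace $\mathcal{F}=\{u\in C^\infty(M):\widehat{u}(k)\in\ker\mathbb{P}(k)^\perp\ \forall k\in\mathbb{N}_0\}$, observe that $\mathbb{P}:\mathcal{F}\to\mathbb{P}(C^\infty(M))$ is a continuous bijection between FS spaces, apply the Open Mapping Theorem to its inverse, and translate the resulting Sobolev estimate (via the projective limit topology) into (a) by testing on unit eigenvectors $\varphi\in\ker\mathbb{P}(k)^\perp$. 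The main obstacle I anticipate is bookkeeping around the target $(E_{\lambda_k})^n$: unlike the scalar case, $\mathbb{P}(k)$ is no longer an endomorphism, so I must verify that $\mathbb{P}(C^\infty(M))$ inherits a well-behaved FS structure inside $(C^\infty(M))^n$ and that the eigenspace solutions can be chosen coherently in $\ker\mathbb{P}(k)^\perp$, essentially through a Moore--Penrose pseudoinverse at each spectral level whose operator norm is controlled by the systemic analogue of $m(\sigma_P(k))^{-1}$.
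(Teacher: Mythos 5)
Your proposal is correct and follows essentially the same route as the paper: the paper's proof of Theorem~\ref{agh_gs_system} is itself just a two-sentence deferral to Propositions~\ref{est_1}, \ref{agh_1} and Theorem~\ref{equiv_gs_agh}, noting that $(C^\infty(M))^n$ and $(\mathscr{D}'(M))^n$ remain FS and DFS, and you are merely spelling out that reduction in detail. One tiny point: your worry at the end about $\mathbb{P}(k)$ not being an endomorphism and about the FS structure of $\mathbb{P}(C^\infty(M))$ is a non-issue, since closed subspaces of FS spaces are FS (as the paper recalls in Section~2), and the Open Mapping argument only needs the restriction $\mathbb{P}:\mathcal{F}\to\mathbb{P}(C^\infty(M))$ to be a continuous bijection between FS spaces, exactly as in the scalar Theorem~\ref{equiv_gs_agh}; also, the Weyl parameter should be taken $\rho>d$ rather than $\rho>d/\nu$ so that $\sum_k(1+\lambda_k)^{-\rho/\nu}<\infty$, but this does not affect the argument.
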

	
	\begin{proof}
		The equivalence between (b) and (c) follows from the same arguments used in the proof of Theorem \ref{equiv_gs_agh}, noting that $(C^\infty(M))^n$ and $(\mathscr{D}'(M))^n$ remain FS and DFS spaces, respectively. The equivalence between (a) and (b) is analogous to Propositions \ref{est_1} and \ref{agh_1}.
	\end{proof}

%==================================================
%==================================================	
\section{Systems of Normal Operators}
%==================================================
%==================================================

	Let $\mathbb{P} = (P_1,\dots,P_n)$ be a system of strongly invariant operators. By assuming that each $P_j$ is normal, it follows that its restriction $P_j|_{E_{\lambda_k}}$ is a normal operator on a finite-dimensional vector space. Consequently, each symbol $\sigma_j(k)$ is a diagonalizable matrix. Thus, for each $j=1,\dots,n$ and $k\in\mathbb{N}_0$, there exists a unitary matrix  
	\begin{equation*}
		Q_j(k)=\begin{bmatrix}
			q_j(k)_{1,1}&\cdots&q_j(k)_{1,d_k}\\
			\vdots&\ddots&\vdots\\
			q_j(k)_{d_k,1}&\cdots&q_j(k)_{d_k,d_k}
		\end{bmatrix}
	\end{equation*}
	such that  
	\begin{equation*}
		\sigma_j(k) = Q_j(k)^*  
		\begin{bmatrix}
			\mu_j(k)_1& \cdots& 0\\
			\vdots&\ddots&\vdots\\
			0&\cdots& \mu_j(k)_{d_k}
		\end{bmatrix}  
		Q_j(k),
	\end{equation*}
	where $\mu_j(k)_1,\dots,\mu_j(k)_{d_k}$ are the eigenvalues of $\sigma_j(k)$.  
	
	For each $j=1,\dots,n$, we define  
	\begin{equation}\label{psi}
		\psi_{\ell}^{j,k} \doteq Q_j(k)\varphi_{\ell}^{k},\quad \ell=1,\dots,d_k.
	\end{equation}  
	
	Then, $\mathcal{B}_{j,k} = \{\psi_\ell^{j,k}\}_{\ell=1}^{d_k}$ forms a basis of $E_{\lambda_k}$ in which $\sigma_j(k)$ is diagonal. Moreover, $Q_j(k)$ represents the change-of-basis matrix from $\mathcal{B}_{k}$ to $\mathcal{B}_{j,k}$.  
	
	For each $j=1,\dots,n$ and $k\in\mathbb{N}_0$, we obtain the linear system  
	\begin{equation*}
		Q_j(k)^* \text{diag}(\mu_j(k)_1,\dots,\mu_j(k)_{d_k}) Q_j(k) \widehat{u}(k) = \widehat{f_j}(k).
	\end{equation*}
	
	Defining  
	\begin{equation}\label{QUW}
		Q_j(k)\widehat{u}(k) = W_j(k) =  
		\begin{bmatrix}
			w_j(k)_1& \cdots& w_j(k)_{d_k}
		\end{bmatrix}^T,
	\end{equation}
	and  
	\begin{equation}\label{QGF}
		Q_j(k)\widehat{f_j}(k) = G_j(k) =  
		\begin{bmatrix}
			g_j(k)_1& \cdots& g_j(k)_{d_k}
		\end{bmatrix}^T,
	\end{equation}
	we obtain the diagonal system  
	\begin{equation}\label{muwg}
		\mu_j(k)_\ell w_j(k)_\ell = g_j(k)_\ell,\quad \forall j=1,\dots,n,\ k\in\mathbb{N}_0,\ \ell=1,\dots,d_k.
	\end{equation}
	
	Since $U(k) = Q_j(k)^* W_j(k)$, we have  
	\begin{equation*}
		\begin{bmatrix}
			\widehat{u}(k)_1\\ \vdots \\ \widehat{u}(k)_{d_k}
		\end{bmatrix} =  
		\begin{bmatrix}
			\overline{q_j(k)_{1,1}} & \cdots& \overline{q_j(k)_{d_k,1}}\\
			\vdots&\ddots&\vdots\\
			\overline{q_j(k)_{1,d_k}} & \cdots & \overline{q_j(k)_{d_k,d_k}}
		\end{bmatrix}  
		\begin{bmatrix}
			w_j(k)_1\\ \vdots \\ w_j(k)_{d_k}
		\end{bmatrix}.
	\end{equation*}
	Thus,  
	\begin{equation}\label{uqw}
		\widehat{u}(k)_\ell = \sum_{m=1}^{d_k} \overline{q_j(k)_{m,\ell}} w_j(k)_{m},\quad\ell=1,\dots,d_k.
	\end{equation}
	
	Since the coefficients $g_j(k)_\ell$ belong to a smooth function $g_j$ for each $j=1,\dots,n$, assume that $k\in\mathbb{N}_0\setminus\mathcal{Z}$, meaning that $\bigcap_{j=1}^n\ker\sigma_j(k)=\{0\}$. Then, choosing any $j\in\{1,\dots,n\}$, we write  
	\begin{equation*}
		\widehat{u}(k)_\ell = \sum_{m=1}^{d_k} \overline{q_{j}(k)_{m,\ell}}w_{j}(k)_{m},\quad\ell=1,\dots,d_k,
	\end{equation*}
	as in (\ref{uqw}). If $\mu_{j}(k)_m\neq 0$, then by (\ref{muwg}) we have  
	\begin{equation*}
		w_{j}(k)_m = \dfrac{g_{j}(k)_m}{\mu_{j}(k)_m}.
	\end{equation*}
	
	For each $m\in\{1,\dots,d_k\}$ such that $\mu_{j}(k)_m=0$, we have $\psi_{m}^{j,k}\in\ker\sigma_{j}(k)$, where $\psi_{m}^{j,k}$ is given by (\ref{psi}). Then, we can choose $j_m\in\{1,\dots,n\}$ such that  
	\begin{equation*}
		\ker\sigma_{j_m}(k)\cap \text{span}\{\psi_{m}^{j,k}\}=\{0\}.
	\end{equation*}
	
	Such a choice of $j_m$ is possible due to the assumption that $k\notin\mathcal{Z}$. Otherwise, we would have  
	\begin{equation*}
		\bigcap_{j=1}^n\ker\sigma_j(k)\supset \text{span}\{\psi_{m}^{j,k}\} \neq \{0\}.
	\end{equation*}
	
	Since $\ker\sigma_{j_m}(k)$ is spanned by the $\psi_{\ell}^{j_m,k}$ for which $\mu_{j_m}(k)_\ell=0$, it follows from (\ref{QUW}) that  
	\begin{equation*}
		W_j(k) = Q_j(k) Q_{j_m}(k)^* W_{j_m}(k).
	\end{equation*}
	
	Denoting  
	\begin{equation*}
		Q_j(k)Q_{j_m}(k)^* =  
		\begin{bmatrix}
			c_{j_m}(k)_{1,1} & \cdots & c_{j_m}(k)_{1,d_k}\\
			\vdots & \ddots & \vdots\\
			c_{j_m}(k)_{d_k,1} & \cdots & c_{j_m}(k)_{d_k,d_k}
		\end{bmatrix},
	\end{equation*}
	we can express  
	\begin{equation*}
		w_j(k)_m = \sum_{\ell=1}^{d_k} c_{j_m}(k)_{m,\ell} w_{j_m}(k)_\ell.
	\end{equation*}
	
	Notice that $Q_{j_m}(k)Q_j(k)^*$ represents the change-of-basis matrix from $\mathcal{B}_{j,k}$ to $\mathcal{B}_{j_m,k}$. Therefore, we have
	\begin{equation*}
		Q_{j_m}(k)Q_j(k)^*=
		\begin{bmatrix}
			\overline{c_{j_m}(k)_{1,1}} & \cdots & \overline{c_{j_m}(k)_{d_k,1}}\\
			\vdots & \ddots & \vdots\\
			\overline{c_{j_m}(k)_{1,d_k}} & \cdots & \overline{c_{j_m}(k)_{d_k,d_k}}
		\end{bmatrix}.
	\end{equation*}
	
	In the basis $\mathcal{B}_{j,k}$, the vector $\psi_m^{j,k}$ is given by  
	\begin{equation*}
		\psi_{m}^{j,k}=\begin{bmatrix} 0 & \cdots & 0 & 1 & 0 & \cdots & 0 \end{bmatrix}_{\mathcal{B}_{j,k}}^T,
	\end{equation*}
	where the entry 1 appears only in the $m$-th row. Transforming this vector to the basis $\mathcal{B}_{j_m,k}$ using the change-of-basis matrix, we obtain  
	\begin{equation*}
		\psi_m^{j,k} = \sum_{s=1}^{d_k}\overline{c_{j_m}(k)_{m,s}}\psi_{s}^{j_m,k}.
	\end{equation*}
	
	Since the vectors $\psi_s^{j_m,k}$, with indices $s\in\{1\leq s\leq d_k \mid \mu_{j_m}(k)_s=0\},$ 
	generate $\ker\sigma_{j_m}(k)$, and we have chosen $j_m$ such that $		\ker\sigma_{j_m}(k) \cap \text{span}\{\psi_{m}^{j,k}\} = \{0\},$ it follows that  $\overline{c_{j_m}(k)_{\ell,s}}=0$, for all $s$ such that $ \mu_{j_m}(k)_s=0.$ Thus, we obtain  
	\begin{equation*}
		w_j(k)_m = \sum_{\ell_m\in I_{j_m}}c_{j_m}(k)_{m,\ell_m}\dfrac{g_{j_m}(k)_{\ell_m}}{\mu_{j_m}(k)_{\ell_m}},
	\end{equation*}
	where  $I_{j_m}=\{1\leq\ell_m\leq d_k \mid \mu_{j_m}(k)_{\ell_m}\neq 0\},$ and we denote  
	$c_{j_m}(k)_{m,\ell_m}=c_m(k)_{\ell_m}.$
		
	Following the same reasoning, for all $k\in\mathbb{N}_0\setminus\mathcal{Z}$ and $\ell=1,\dots,d_k$, we can express  
	\begin{equation}\label{ucgmu}
		\widehat{u}(k)_\ell = \sum_{m\in I_j}\overline{q_j(k)_{m,\ell}}\dfrac{g_j(k)_m}{\mu_j(k)_m}+\sum_{m\in Z_j}\overline{q_j(k)_{m,\ell}}\sum_{\ell_m\in I_{j_m}}c_m(k)_{\ell_m}\dfrac{g_{j_m}(k)_{\ell_m}}{\mu_{j_m}(k)_{\ell_m}},
	\end{equation}
	where  $I_j=\{1\leq\ell\leq d_k\,:\, \mu_j(k)_\ell\neq 0\},$ and $Z_j=\{1\leq\ell\leq d_k\,:\, \mu_j(k)_\ell= 0\}.$
	
	For each $k\in\mathbb{N}_0$, we define  
	\begin{equation*}
		\|\mu(k)\| = \min\left(\{|\mu_j(k)_\ell|\,:\, 1\leq j\leq n,\ 1\leq \ell\leq d_k\}\setminus\{0\}\right),
	\end{equation*}
	which represents the smallest nonzero eigenvalue (in norm) among the matrices $\sigma_j(k)$, for $j=1,\dots,n$. 
	
	We consider the following condition: there exists $\gamma>0$ such that  
	\begin{equation}\label{DC}
		\lambda_k\geq\gamma\ \Rightarrow\ \|\mu(k)\|\geq (1+\lambda_k)^{-\gamma}.
	\end{equation}
	
	\begin{theorem}\label{gh_system_normal}
		If $\mathcal{Z}$ is finite and \eqref{DC} holds, $\mathbb{P}$ is globally hypoelliptic.		
	\end{theorem}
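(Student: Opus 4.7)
The plan is to use the explicit formula (\ref{ucgmu}) for $\widehat{u}(k)$ derived just before the statement, together with the unitarity of the change-of-basis matrices $Q_j(k)$, the hypothesis (\ref{DC}), and the asymptotic bound $d_k \leq C(1+\lambda_k)^{d/\nu}$ from \eqref{asymp}. Let $u \in \mathscr{D}'(M)$ with $P_j u = f_j \in C^\infty(M)$ for every $j \in \{1,\ldots,n\}$. Since $\mathcal{Z}$ is finite, the partial sum $\sum_{k \in \mathcal{Z}} \sum_{\ell} \widehat{u}(k)_\ell \varphi_\ell^k$ is a finite linear combination of smooth eigenfunctions and may be discarded. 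For every $k \notin \mathcal{Z}$ we have $\ker \mathbb{P}(k) = \{0\}$, so $\widehat{u}(k)$ is uniquely determined by $\widehat{f_1}(k), \ldots, \widehat{f_n}(k)$ and is given by formula (\ref{ucgmu}).

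I would then bound each factor appearing in (\ref{ucgmu}) as follows. Since $Q_j(k)$ is unitary, $|q_j(k)_{m,\ell}| \leq 1$ and $|g_j(k)_m| \leq \|G_j(k)\| = \|\widehat{f_j}(k)\|$; because $Q_j(k)Q_{j_m}(k)^*$ is also unitary, the entries $c_m(k)_{\ell_m}$ are bounded by $1$ in absolute value as well. By condition (\ref{DC}), $|\mu_{j_m}(k)_{\ell_m}|^{-1} \leq (1+\lambda_k)^\gamma$ whenever $\mu_{j_m}(k)_{\ell_m} \neq 0$ and $\lambda_k \geq \gamma$. Inserting these bounds into (\ref{ucgmu}) and combining with \eqref{asymp} and the rapid decay $\|\widehat{f_j}(k)\| \leq C_N(1+\lambda_k)^{-N}$ from $f_j \in C^\infty(M)$ yields an estimate of the form
\begin{equation*}
\|\widehat{u}(k)\| \leq C' d_k^{5/2}(1+\lambda_k)^{\gamma}\max_{1 \leq j \leq n}\|\widehat{f_j}(k)\|,
\end{equation*}
so the polynomial prefactor is absorbed by the rapid decay and $\|\widehat{u}(k)\| \leq C_N''(1+\lambda_k)^{-N}$ for every $N$. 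By the characterization \eqref{fourier_coef}, $u \in C^\infty(M)$, and hence $\mathbb{P}$ is globally hypoelliptic.

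The only real difficulty is organizational: formula (\ref{ucgmu}) involves a double sum over $I_j$ and over pairs in $Z_j \times I_{j_m}$ with the auxiliary indices $j_m$ chosen so as to escape the kernels, and one must verify that every coefficient appearing is either a unitary entry (bounded by $1$) or a reciprocal of a nonzero eigenvalue controlled by (\ref{DC}). Once this bookkeeping is done, no new idea beyond the construction already carried out before the theorem is required.
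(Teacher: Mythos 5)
Your proposal is correct and follows essentially the same argument as the paper: bound each coefficient in formula (\ref{ucgmu}) using unitarity of the $Q_j(k)$, control reciprocals of nonzero eigenvalues via \eqref{DC}, absorb the polynomial-in-$d_k$ prefactor with the Weyl bound \eqref{asymp}, and use the rapid decay of $\widehat{f_j}(k)$ to conclude $u\in C^\infty(M)$. The only cosmetic difference is the power of $d_k$ you track (your $d_k^{5/2}$ versus the paper's $d_k$ in the pointwise bound), and your explicit handling of the finitely many $k\in\mathcal{Z}$ versus the paper's ``for sufficiently large $k$''; neither affects the conclusion.
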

	
	\begin{proof}
		Let $u\in\mathscr{D}'(M)$ and $f_j\in C^\infty(M)$ such that $P_ju=f_j$, for $j=1,\dots,n$. From (\ref{ucgmu}), for sufficiently large $k$, we obtain  
		\begin{equation*}
			|\widehat{u}(k)_\ell| \leq \dfrac{d_k}{\|\mu(k)\|}\max_{1\leq j\leq n}\|G_j(k)\| = \dfrac{d_k}{\|\mu(k)\|}\max_{1\leq j\leq n}\|\widehat{f_j}(k)\|.
		\end{equation*}
		Since $|q_j(k)_{m,\ell}|\leq 1$ and $|c_m(k)_{\ell_m}|\leq 1$ (as they are coefficients of unitary matrices), we have  
		\begin{equation*}
			|g_j(k)_m|\leq \|G_j(k)\|,\quad\forall m=1,\dots,d_k,\ \forall j=1,\dots,n.
		\end{equation*}
		Moreover, by (\ref{QGF}),  
		\begin{equation*}
			\|G_j(k)\|=\|Q_j(k)\widehat{f_j}(k)\|=\|\widehat{f_j}(k)\|,\quad \forall j=1,\dots,n.
		\end{equation*}
		
		Since $f_j\in C^\infty(M)$, for each $N\in\mathbb{N}$, there exists $C_N>0$ such that  
		\begin{equation*}
			\|\widehat{f_j}(k)\|\leq C_N(1+\lambda_k)^{-N-\gamma-\kappa},\quad\forall j=1,\dots,n,
		\end{equation*}
		where $\kappa=d/\nu$, with $d=\dim(M)$ and $\nu>0$ the order of the elliptic operator $E$. By (\ref{asymp}), there exists $K>0$ such that $d_k\leq K(1+\lambda_k)^{\kappa}$ for all $k\in\mathbb{N}_0$. Consequently,  
		\begin{align*}
			\|\widehat{u}(k)\|^2 & = \sum_{\ell=1}^{d_k}|\widehat{u}(k)_\ell|^2  
			\leq \dfrac{d_k^2}{\|\mu(k)\|^2}\max_{1\leq j\leq n}\|\widehat{f_j}(k)\|^2\\
			& \leq K^2(1+\lambda_k)^{2\kappa}C_N^2(1+\lambda_k)^{2\gamma}(1+\lambda_k)^{-2N-2\gamma-2\kappa}\\
			& = K^2C_N^2(1+\lambda_k)^{-2N}.
		\end{align*}
		This implies $\|\widehat{u}(k)\|\leq C(1+\lambda_k)^{-N}$, with $C=KC_N$. Thus, $u\in C^\infty(M)$, proving that $\mathbb{P}$ is globally hypoelliptic.	
	\end{proof}
	
	If we remove the finiteness assumption on $\mathcal{Z}$, by choosing $\widehat{u}(k)=0$ for $k\in\mathcal{Z}$, we obtain the following result for the global solvability of $\mathbb{P}$:
	
	\begin{theorem}\label{gs_system_normal}
		If \eqref{DC} holds, then $\mathbb{P}$ is globally solvable.
	\end{theorem}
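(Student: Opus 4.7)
The plan is to verify almost global hypoellipticity of $\mathbb{P}$, which by Theorem \ref{agh_gs_system} is equivalent to global solvability. Given $u\in\mathscr{D}'(M)$ with $\mathbb{P}u=(f_1,\dots,f_n)\in(C^\infty(M))^n$, a smooth representative $v$ of the same $\mathbb{P}$-class will be constructed by setting $\widehat{v}(k)\doteq P_{\ker\mathbb{P}(k)^\perp}\widehat{u}(k)$ for every $k\in\mathbb{N}_0$. Since $\widehat{u}(k)-\widehat{v}(k)\in\ker\mathbb{P}(k)$, one automatically has $\sigma_j(k)\widehat{v}(k)=\sigma_j(k)\widehat{u}(k)=\widehat{f_j}(k)$ for all $j,k$, hence $\mathbb{P}v=\mathbb{P}u$, and only the smoothness of $v$ remains to be established.

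For $k\notin\mathcal{Z}$ one has $\widehat{v}(k)=\widehat{u}(k)$, and the formula \eqref{ucgmu} combined with \eqref{DC} gives rapid decay of $\widehat{v}(k)$ exactly as in the proof of Theorem \ref{gh_system_normal}. For $k\in\mathcal{Z}$ the normality of each $P_j$ is crucial: because $\ker\sigma_j(k)=\ker\sigma_j(k)^*$, every $\psi\in\ker\mathbb{P}(k)\subseteq\ker\sigma_j(k)^*$ is orthogonal to the range of $\sigma_j(k)$, so (i) $\ker\mathbb{P}(k)^\perp$ is $\sigma_j(k)$-invariant and (ii) $\widehat{f_j}(k)=\sigma_j(k)\widehat{u}(k)$ already lies in $\ker\mathbb{P}(k)^\perp$.

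On this invariant subspace the restricted operators $\widetilde\sigma_j(k)\doteq\sigma_j(k)|_{\ker\mathbb{P}(k)^\perp}$ remain normal, their joint kernel equals $\ker\mathbb{P}(k)\cap\ker\mathbb{P}(k)^\perp=\{0\}$, and their nonzero eigenvalues are among those of $\sigma_j(k)$, hence still bounded below in norm by $\|\mu(k)\|$. The derivation of \eqref{ucgmu} therefore applies verbatim inside $\ker\mathbb{P}(k)^\perp$ to express $\widehat{v}(k)$ in terms of the $\widehat{f_j}(k)$, and the arithmetic that closes the proof of Theorem \ref{gh_system_normal} then yields $\|\widehat{v}(k)\|\leq C_N(1+\lambda_k)^{-N}$ for every $N\in\mathbb{N}$. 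Thus $v\in C^\infty(M)$ with $\mathbb{P}v=\mathbb{P}u$, which proves that $\mathbb{P}$ is almost globally hypoelliptic.

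The main obstacle is the case $k\in\mathcal{Z}$, where \eqref{system} is no longer uniquely solvable and the derivation of \eqref{ucgmu} breaks down on the full eigenspace. The role of the normality hypothesis is to restore the analysis by forcing $\sigma_j(k)$ to respect the decomposition $E_{\lambda_k}=\ker\mathbb{P}(k)\oplus\ker\mathbb{P}(k)^\perp$, so that the solution formula and eigenvalue estimate of the previous theorem transfer without modification to the injective piece.
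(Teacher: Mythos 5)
Your proof is correct and takes essentially the same route as the paper: decompose $\widehat{u}(k)$ into its $\ker\mathbb{P}(k)$ and $\ker\mathbb{P}(k)^\perp$ components, keep the latter as $\widehat{v}(k)$, restrict the symbols to $\ker\mathbb{P}(k)^\perp$ where the joint kernel is trivial, and rerun the Theorem \ref{gh_system_normal} estimate to conclude $v\in C^\infty(M)$ and invoke Theorem \ref{agh_gs_system}. You usefully make explicit a point the paper leaves tacit, namely that normality of each $\sigma_j(k)$ makes $\ker\mathbb{P}(k)^\perp$ invariant under $\sigma_j(k)$ and $\sigma_j(k)^*$, so the restrictions $\tilde\sigma_j(k)$ are genuinely normal operators on that subspace and their nonzero eigenvalues inherit the lower bound from \eqref{DC}.
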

	
	\begin{proof}
		Let $u\in\mathscr{D}'(M)$ and suppose that $P_ju=f_j\in C^\infty(M)$ for all $j=1,\dots,n$. For each $k\in\mathbb{N}_0$, we decompose $\widehat{u}(k) = \widehat{v}(k)+\widehat{w}(k),$ where $\widehat{v}(k)\in\ker\mathbb{P}(k)^\perp$ and $\widehat{w}(k)\in\ker\mathbb{P}(k)$. Then, the distribution $v$ with coefficients $\widehat{v}(k)$, satisfies $P_ju=P_jv$ for all $j=1,\dots,n$.
		
		To show that $v\in C^\infty(M)$, we consider, for each $j=1,\dots,n$ and $k\in\mathbb{N}_0$, the restriction $\tilde\sigma_j(k)$ of $\sigma_j(k)$ to $\ker\mathbb{P}(k)^\perp$, which satisfies  
		\begin{equation*}
			\bigcap_{j=1}^n\ker\tilde\sigma_j(k)=\{0\}.
		\end{equation*}
		
		Proceeding as in the globally hypoelliptic case, \eqref{DC} implies that  $v$ is smooth. Thus, $\mathbb{P}$ is almost globally hypoelliptic, and by Theorem \ref{agh_gs_system}, the system $\mathbb{P}$ is globally solvable.
	\end{proof}

\begin{remark}
	In the study of left-invariant vector fields on compact Lie groups, where normality naturally occurs, our results recover and generalize results previously found for operators on the torus and in non-commutative Lie groups. 
	
	In particular, every left-invariant vector field $X\in\mathfrak{g}$ is a skew-symmetric operator (therefore normal) and commutes with the elliptic Laplace-Beltrami operator $\Delta_G$ if $G$ is a compact Lie group with Lie algebra $\mathfrak{g}$. In \cite{KMR20, KMR20b}, spectral criteria have been used to characterize the global hypoellipticity and solvability of such vector fields. Additionally, as examined in \cite{RuzhTur} and covered in Section 6 of \cite{DR2018} and Section 4 of \cite{KM2020}, the Fourier analysis resulting from representation theory is closely related to the Fourier analysis induced by the positive elliptic operator $I-\Delta_G$. 
	
	Relevant examples of such systems naturally arise in the setting of Lie groups. For instance, let $G$ be a compact Lie group with Lie algebra $\mathfrak{g}$, and let $\mathfrak{h} \subset \mathfrak{g}$ be a Lie subalgebra. Since $\mathfrak{h}$ satisfies the Frobenius condition, it induces an involutive structure on $G$. If $X_1, \dots, X_n$ form a basis for $\mathfrak{h}$, then the system $(X_1, \dots, X_n)$ defines a family of strongly invariant normal operators on $G$ with respect to $I - \Delta_G$.  

	These considerations shows that our results in Sections 3 and 4 not only recover but also generalize previous results in the literature. Furthermore,  in this Section, we introduce new techniques that enable us to derive explicit solutions for systems of normal operators, reinforcing the connection between spectral analysis and global properties of differential operators.
\end{remark}

%==================================================
%==================================================	
\section{Systems of Commuting Normal Operators}
%==================================================
%==================================================	

Let $\mathbb{P}=(P_1,\dots,P_n)$ be a system of strongly invariant normal operators satisfying the commutativity condition $	[P_i,P_j] = P_iP_j - P_jP_i = 0$, $i,j=1,\dots,n$, which ensures that the matrices $\sigma_1(k),\dots,\sigma_n(k)$ are simultaneously diagonalizable for all $k\in\mathbb{N}_0$.  

Following the notation from the previous section, for each $k\in\mathbb{N}_0$, we denote by $\mathcal{B}_k=\{\varphi_\ell^{k} \mid 1\leq \ell \leq d_k\}$ an orthonormal basis, and we represent each diagonal matrix as  
\begin{equation*}
	\sigma_j(k)= \text{diag}(\mu_j(k)_1,\dots,\mu_j(k)_{d_k}), \quad j=1,\ldots, n.
\end{equation*}

For each $k\in\mathbb{N}_0$ and $\ell=1,\dots,d_k$, we define  
\begin{equation*}
	\|\mu(k)_\ell\|=\max\{|\mu_j(k)_\ell| \mid j=1,\dots,n\},
\end{equation*}
and consider the set  
\begin{equation*}
	\mathcal{Z}=\left\{k\in\mathbb{N}_0 \mid \exists \ell\in\{1,\dots,d_k\} \text{ such that } \|\mu(k)_\ell\|=0\right\}.
\end{equation*}

	\begin{theorem}
		Let $\mathbb{P}=(P_1,\dots,P_n)$ be a system of strongly invariant normal operators that commute pairwise. Then, $\mathbb{P}$ is globally hypoelliptic if and only if the set $\mathcal{Z}$ is finite and the following condition holds: there exist $C,\gamma>0$ such that
		\begin{equation}\label{cond_comm}
			\lambda_k\geq\gamma\ \Rightarrow\ \|\mu(k)_\ell\|\geq C(1+\lambda_k)^{-\gamma},\quad \forall\ell=1,\dots,d_k.
		\end{equation}
	\end{theorem}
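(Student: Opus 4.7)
The plan is to reduce the theorem to Theorem \ref{gh_est} by exploiting the simultaneous diagonalization of the symbols. Since the $P_j$ are normal and pairwise commuting, there is an orthonormal basis $\mathcal{B}_k=\{\varphi_\ell^k\}$ of $E_{\lambda_k}$ in which every $\sigma_j(k)=\mathrm{diag}(\mu_j(k)_1,\dots,\mu_j(k)_{d_k})$ simultaneously. Writing a unit vector $\varphi=\sum_\ell a_\ell\varphi_\ell^k$, a direct computation gives
\begin{equation*}
\sum_{j=1}^{n}\|\sigma_j(k)\varphi\|^2=\sum_{\ell=1}^{d_k}|a_\ell|^2\,M_\ell(k),\qquad M_\ell(k)\doteq\sum_{j=1}^{n}|\mu_j(k)_\ell|^2,
\end{equation*}
a convex combination of the $M_\ell(k)$, so $\inf_{\|\varphi\|=1}\sum_j\|\sigma_j(k)\varphi\|^2=\min_\ell M_\ell(k)$, with the infimum attained at a basis vector $\varphi=\varphi_\ell^k$. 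The elementary bounds $\|\mu(k)_\ell\|^2\le M_\ell(k)\le n\|\mu(k)_\ell\|^2$ (since $\|\mu(k)_\ell\|=\max_j|\mu_j(k)_\ell|$) then show that $\min_\ell\|\mu(k)_\ell\|$ and $\sqrt{\min_\ell M_\ell(k)}$ are comparable up to the factor $\sqrt n$.

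For the necessity ($\Rightarrow$), I apply Theorem \ref{gh_est}: there exist $C',\gamma'>0$ with $\sum_j\|\sigma_j(k)\varphi\|^2\ge C'^{\,2}(1+\lambda_k)^{-2\gamma'}$ for $k\ge\gamma'$ and $\|\varphi\|=1$. Testing on $\varphi=\varphi_\ell^k$ and using $M_\ell(k)\le n\|\mu(k)_\ell\|^2$ yields $\|\mu(k)_\ell\|\ge (C'/\sqrt n)(1+\lambda_k)^{-\gamma'}$ for all $\ell$ and all sufficiently large $k$; since $\lambda_k\to\infty$ and is increasing, translating the threshold $k\ge\gamma'$ into a threshold $\lambda_k\ge\tilde\gamma$ (and replacing both $\gamma'$ and $\tilde\gamma$ by their maximum) produces condition \eqref{cond_comm}. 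Finiteness of $\mathcal{Z}$ is immediate from Proposition \ref{prop_Z}, noting that in the simultaneously diagonal setting $\ker\mathbb{P}(k)=\bigcap_j\ker\sigma_j(k)=\mathrm{span}\{\varphi_\ell^k:\|\mu(k)_\ell\|=0\}$, so the two definitions of $\mathcal{Z}$ coincide.

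For the sufficiency ($\Leftarrow$), assume $\mathcal{Z}$ is finite and that \eqref{cond_comm} holds with constants $C,\gamma$. After enlarging $\gamma$ so that every $k\in\mathcal{Z}$ satisfies $\lambda_k<\gamma$ and the index threshold $k\ge\gamma$ is also enforced, for $k\ge\gamma$ every $\|\mu(k)_\ell\|\ge C(1+\lambda_k)^{-\gamma}$. The lower bound $M_\ell(k)\ge\|\mu(k)_\ell\|^2$ and the identity from the first paragraph give
\begin{equation*}
\Bigl(\sum_{j=1}^{n}\|\sigma_j(k)\varphi\|^2\Bigr)^{1/2}\ge\sqrt{\min_\ell M_\ell(k)}\ge C(1+\lambda_k)^{-\gamma},\quad \|\varphi\|=1,
\end{equation*}
which is exactly condition \eqref{GH-system-condition}, and Theorem \ref{gh_est} concludes that $\mathbb{P}$ is globally hypoelliptic.

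There is no substantial analytic obstacle: the proof is a straightforward reduction once one observes that simultaneous diagonalization collapses the spectral estimate of Theorem \ref{gh_est} to a coordinatewise condition on the eigenvalues. The only bookkeeping care needed is in aligning the two threshold conventions (index $k\ge\gamma$ versus eigenvalue $\lambda_k\ge\gamma$) and absorbing the factor $\sqrt n$ into the constants.
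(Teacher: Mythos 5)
Your proof is correct, and it takes a genuinely different route from the paper's. You reduce \emph{both} directions of the equivalence to Theorem~\ref{gh_est}, via the convexity identity $\sum_{j}\|\sigma_j(k)\varphi\|^2=\sum_\ell |a_\ell|^2 M_\ell(k)$ with $M_\ell(k)=\sum_j|\mu_j(k)_\ell|^2$, which shows that the system-level infimum over unit $\varphi$ collapses to $\min_\ell M_\ell(k)$, and this quantity is pinched between $\min_\ell\|\mu(k)_\ell\|^2$ and $n\min_\ell\|\mu(k)_\ell\|^2$; condition~\eqref{cond_comm} then becomes precisely condition~\eqref{GH-system-condition} up to a factor $\sqrt{n}$ and the index-versus-eigenvalue threshold translation, which you handle correctly. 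The paper, by contrast, proves sufficiency by invoking Theorem~\ref{gh_system_normal} and proves necessity by directly constructing a distribution $u\in\mathscr{D}'(M)\setminus C^\infty(M)$ with $P_j u$ smooth from a sequence of small $\|\mu(k_N)_{\ell_N}\|$. Your approach buys two things. For necessity it is shorter, since the counterexample construction is already encapsulated in the necessity direction of Theorem~\ref{gh_est}. For sufficiency it is more robust: Theorem~\ref{gh_system_normal} as stated requires condition~\eqref{DC}, a lower bound on $\|\mu(k)\|$, the \emph{smallest nonzero} eigenvalue across all $j$ and $\ell$, whereas \eqref{cond_comm} controls only $\min_\ell\max_j|\mu_j(k)_\ell|$; these do not coincide (take $n=2$, $d_k=1$, $\mu_1(k)_1=1$, $\mu_2(k)_1=e^{-\lambda_k}$, which satisfies \eqref{cond_comm} but not \eqref{DC}), so the paper's cited implication only goes through after re-deriving the sharper estimate from \eqref{ucgmu} in the simultaneously diagonal setting. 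Your reduction to Theorem~\ref{gh_est} sidesteps this entirely. One small point worth stating explicitly, which you do note in passing: Proposition~\ref{prop_Z} is stated with $\mathcal{Z}=\{k:\ker\mathbb{P}(k)\neq\{0\}\}$, and one must verify this coincides with the Section~6 definition $\mathcal{Z}=\{k:\exists\ell,\ \|\mu(k)_\ell\|=0\}$; your observation that $\ker\mathbb{P}(k)=\mathrm{span}\{\varphi_\ell^k:\|\mu(k)_\ell\|=0\}$ in the simultaneously diagonal basis settles this.
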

	
	\begin{proof}
		If $\mathcal{Z}$ is finite and \eqref{cond_comm} holds, then Theorem \ref{gh_system_normal} ensures that $\mathbb{P}$ is globally hypoelliptic.
		
		Conversely, if $\mathcal{Z}$ is infinite, then by Proposition \ref{prop_Z}, $\mathbb{P}$ is not globally hypoelliptic. Suppose now that $\mathcal{Z}$ is finite, but condition \eqref{cond_comm} does not hold. Then, for each $N\in\mathbb{N}\setminus\mathcal{Z}$, there exist indices $k_N\in\mathbb{N}_0$, $j_N\in\{1,\dots,n\}$, and $\ell_N\in\{1,\dots,d_{k_N}\}$ such that
		\begin{equation*}
			0<\|\mu_{j_N}(k_N)_{\ell_N}\|<(1+\lambda_{k_N})^{-N}.
		\end{equation*}
		
		For each $k\in\mathbb{N}$, $j=1,\dots,n$, and $\ell=1,\dots,d_k$, define  
		\begin{equation*}
			\widehat{f_j}(k)_\ell=\begin{cases}
				\mu_{j_N}(k_N)_{\ell_N}, & \text{if } k=k_N,\, j=j_N,\, \ell=\ell_N \text{ for some } N\in\mathbb{N}\setminus\mathcal{Z},\\
				0, & \text{otherwise.}
			\end{cases}
		\end{equation*}
		It is clear that each function \( f_j \), defined by the coefficients above, is smooth. Additionally, the distribution \( u \in \mathscr{D}'(M)\setminus C^\infty(M) \) given by
		\begin{equation*}
			\widehat{u}(k)_\ell =\begin{cases}
				1, & \text{if } k=k_N,\, \ell=\ell_N \text{ for some } N\in\mathbb{N}\setminus\mathcal{Z},\\
				0, & \text{otherwise.}
			\end{cases}
		\end{equation*}
		satisfies \( P_j u = f_j \) for all \( j=1,\dots,n \). Thus, $\mathbb{P}$ is not globally hypoelliptic.
	\end{proof}
	
	\begin{theorem}
		Let $\mathbb{P}=(P_1,\dots,P_n)$ be a system of strongly invariant normal operators that commute pairwise. Then, $\mathbb{P}$ is globally solvable if and only if \eqref{cond_comm} holds.
	\end{theorem}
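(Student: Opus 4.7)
The plan is to translate the claim into condition (a) of Theorem~\ref{agh_gs_system}, which characterises global solvability by the existence of $C,\rho>0$ with
\begin{equation*}
\Big(\sum_{j=1}^n \|\sigma_j(k)\varphi\|^2\Big)^{1/2} \geq C(1+\lambda_k)^{-\rho}\|\varphi\|,\quad \varphi\in\ker\mathbb{P}(k)^\perp,\ k\in\mathbb{N}_0,
\end{equation*}
and to exploit the simultaneous diagonalisation afforded by the commuting normal hypothesis. Working in a common orthonormal eigenbasis $\{\varphi_1^k,\dots,\varphi_{d_k}^k\}$ of $E_{\lambda_k}$, each $\sigma_j(k)$ acts diagonally with entries $\mu_j(k)_\ell$, and one has the key identifications $\ker\mathbb{P}(k)=\mathrm{span}\{\varphi_\ell^k:\|\mu(k)_\ell\|=0\}$ and $\ker\mathbb{P}(k)^\perp=\mathrm{span}\{\varphi_\ell^k:\|\mu(k)_\ell\|\neq 0\}$.

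For sufficiency, I would take $\varphi=\sum_\ell a_\ell\varphi_\ell^k\in\ker\mathbb{P}(k)^\perp$ (so $a_\ell=0$ on indices with $\|\mu(k)_\ell\|=0$) and compute
\begin{equation*}
\sum_{j=1}^n\|\sigma_j(k)\varphi\|^2 = \sum_{\ell=1}^{d_k}\Big(\sum_{j=1}^n|\mu_j(k)_\ell|^2\Big)|a_\ell|^2 \geq \sum_{\ell}\|\mu(k)_\ell\|^2|a_\ell|^2,
\end{equation*}
using $\sum_j|\mu_j(k)_\ell|^2\geq\max_j|\mu_j(k)_\ell|^2=\|\mu(k)_\ell\|^2$. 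Restricting to the indices where $a_\ell$ may be nonzero and applying \eqref{cond_comm} yields $\sum_j\|\sigma_j(k)\varphi\|^2\geq C^2(1+\lambda_k)^{-2\gamma}\|\varphi\|^2$ for $\lambda_k\geq\gamma$; the finitely many $k$ with $\lambda_k<\gamma$ are handled by the positive definiteness of $\varphi\mapsto\sum_j\|\sigma_j(k)\varphi\|^2$ on the finite-dimensional $\ker\mathbb{P}(k)^\perp$, after shrinking the constant. This establishes (a) of Theorem~\ref{agh_gs_system}, hence global solvability.

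For necessity, assume $\mathbb{P}$ is globally solvable and pick $C,\rho>0$ from Theorem~\ref{agh_gs_system}(a). Testing the estimate on the single basis vector $\varphi_\ell^k$ whenever $\|\mu(k)_\ell\|\neq 0$ (so $\varphi_\ell^k\in\ker\mathbb{P}(k)^\perp$), and using $\sigma_j(k)\varphi_\ell^k=\mu_j(k)_\ell\varphi_\ell^k$, gives $\sum_{j=1}^n|\mu_j(k)_\ell|^2\geq C^2(1+\lambda_k)^{-2\rho}$. The elementary bound $\sum_j|\mu_j(k)_\ell|^2\leq n\|\mu(k)_\ell\|^2$ (comparison of $\ell^2$ and $\ell^\infty$ norms on $\mathbb{C}^n$) then delivers $\|\mu(k)_\ell\|\geq Cn^{-1/2}(1+\lambda_k)^{-\rho}$, i.e.\ \eqref{cond_comm}.

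I do not foresee a major obstacle; both directions reduce to diagonal-basis bookkeeping. The only delicate point is the quantifier in \eqref{cond_comm}, which must be interpreted (consistently with condition (a) of Theorem~\ref{agh_gs_system} and the orthogonal decomposition induced by simultaneous diagonalisation) as a lower bound on the \emph{nonzero} values $\|\mu(k)_\ell\|$; on indices where $\|\mu(k)_\ell\|=0$ no bound is required since the corresponding basis vectors lie in $\ker\mathbb{P}(k)$ and are projected away.
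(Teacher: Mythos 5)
Your proof is correct, and it is actually tighter than the one in the paper; the routes diverge in the sufficiency direction, and your version closes a gap in the published argument.

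For sufficiency, the paper simply cites Theorem~\ref{gs_system_normal}, which requires hypothesis~\eqref{DC} involving $\|\mu(k)\|=\min\{\lvert\mu_j(k)_\ell\rvert\ \text{nonzero}\}$, the smallest nonzero eigenvalue ranging over \emph{all} $j$ and $\ell$. But~\eqref{cond_comm} controls $\|\mu(k)_\ell\|=\max_j\lvert\mu_j(k)_\ell\rvert$ for each fixed $\ell$, which does \emph{not} imply~\eqref{DC}: take $n=2$ with $\mu_1(k)_\ell\equiv 1$ and $\mu_2(k)_\ell\equiv 2^{-\lambda_k}$, so $\|\mu(k)_\ell\|\equiv 1$ satisfies~\eqref{cond_comm} while $\|\mu(k)\|=2^{-\lambda_k}$ violates~\eqref{DC}. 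Your direct verification of condition~(a) of Theorem~\ref{agh_gs_system} — expanding $\varphi\in\ker\mathbb{P}(k)^\perp$ in the common diagonalising basis and using $\sum_j\lvert\mu_j(k)_\ell\rvert^2\geq\|\mu(k)_\ell\|^2$ — bypasses this issue entirely and in fact shows why the commuting case is genuinely easier than the general normal case: one can pick, for each $\ell$, the best $j$, which is precisely what $\|\mu(k)_\ell\|$ encodes. The necessity direction agrees with the paper's argument (test Theorem~\ref{agh_gs_system}(a) on the common eigenvectors and use $\sum_j\lvert\mu_j(k)_\ell\rvert^2\leq n\|\mu(k)_\ell\|^2$), but you correctly restrict to those $\ell$ with $\|\mu(k)_\ell\|\neq 0$, since only then is $\varphi_\ell^k\in\ker\mathbb{P}(k)^\perp$ and the inequality applicable; the paper silently overlooks this case. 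Your closing remark about the quantifier in~\eqref{cond_comm} is not merely cosmetic: read literally, $\forall\ell$ would make the theorem false when $\ker\mathbb{P}(k)\neq\{0\}$ for infinitely many $k$ (e.g.\ $\mathbb{P}\equiv 0$ is globally solvable yet $\|\mu(k)_\ell\|\equiv 0$), so the restriction to indices $\ell$ with $\|\mu(k)_\ell\|\neq 0$ is indeed the correct reading and should be made explicit.
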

	
	\begin{proof}
		If condition \eqref{cond_comm} holds, then Theorem \ref{gs_system_normal} guarantees that $\mathbb{P}$ is globally solvable.
		
		Conversely, suppose that $\mathbb{P}$ is globally solvable. Then, the estimate from Theorem \ref{agh_gs_system}(a) holds. For each \( k\in\mathbb{N}_0 \) and \( \ell=1,\dots,d_k \), let \( j_0\in\{1,\dots,n\} \) be such that $|\mu_{j_0}(k)_\ell|=\|\mu(k)_\ell\|$,
		and take \( \varphi \) as an associated eigenvector with \( \|\varphi\|=1 \). Then,
		\begin{align*}
			\|\mu(k)_\ell\|^2 &= |\mu_{j_0}(k)_\ell|^2\|\varphi\|^2\ \geq\ n^{-2}\sum_{j=1}^n|\mu_j(k)_\ell|^2\|\varphi\|^2  = n^{-2}\sum_{j=1}^n\|\sigma_j(k)\varphi\|^2\\
			& \geq\ n^{-2}C^{2}(1+\lambda_k)^{-2\gamma}.
		\end{align*}
		Therefore, \eqref{cond_comm} holds.
	\end{proof}
	
\begin{remark}
	The case of commuting systems is significantly simpler and follows as a consequence of the previous cases. However, this setting encompasses several important examples, such as systems defined on the torus (see \cite{AKM19,AM2021,Be1999,BP1999_jmaa}) and systems of left-invariant operators on products of compact Lie groups (see \cite{DKP2024arxiv}).
\end{remark}

\bibliographystyle{amsplain}
\bibliography{references}
	
\end{document}